\documentclass[12pt,oneside,english]{amsart}
\usepackage[T1]{fontenc}
\usepackage[latin9]{inputenc}
\usepackage{geometry}
\usepackage{babel}
\usepackage{amstext}
\usepackage{amsthm}
\usepackage{amssymb}
\usepackage{color}
\usepackage{esint}
\usepackage[unicode=true,pdfusetitle,
 bookmarks=true,bookmarksnumbered=false,bookmarksopen=false,
 breaklinks=false,pdfborder={0 0 1},backref=false,colorlinks=false]
 {hyperref}

\makeatletter
\numberwithin{equation}{section}
\numberwithin{figure}{section}
\theoremstyle{plain}
\newtheorem{thm}{\protect\theoremname}
\theoremstyle{plain}
\newtheorem{cor}[thm]{\protect\corollaryname}
\theoremstyle{plain}
\newtheorem{lem}[thm]{\protect\lemmaname}
\theoremstyle{plain}
\newtheorem{conjecture}[thm]{\protect\conjecturename}
\theoremstyle{definition}

\theoremstyle{plain}

\theoremstyle{remark}

\makeatother

\providecommand{\conjecturename}{Conjecture}
\providecommand{\corollaryname}{Corollary}
\providecommand{\definitionname}{Definition}
\providecommand{\lemmaname}{Lemma}
\providecommand{\propositionname}{Proposition}
\providecommand{\remarkname}{Remark}
\providecommand{\theoremname}{Theorem}
\newcommand{\sech}{\operatorname{sech}}

\begin{document}
\title{on the discrete convolution of the liouville and m\"obius functions}
\author{marco cantarini, alessandro gambini, alessandro zaccagnini}
\subjclass[2020]{Primary 11P32, secondary 44A05, 42A85.}
\date{\today}
\begin{abstract}
In this article we study some properties of the discrete convolution
of Liouville function $S(n):=\sum_{m_{1}+m_{2}=n}\lambda\left(m_{1}\right)\lambda\left(m_{2}\right)$,
which is a Goldbach-type counting function of representations. In
particular, using the general approach introduced in a recent paper
\cite{CGZ}, we will give an explicit formula for weighted averages
of $S(n)$ with a general weights $f(w)$ that verify suitable conditions.
This formula allows us to obtain interesting information about the
Dirichlet and power series of $S(n)$ and the discrete convolution
with an arbitrary numbers of factors $\lambda(n)$.
\end{abstract}

\keywords{Additive problems, explicit formulas, convolution, Liouville function.}
\maketitle

\section{introduction}

In number theory, in particular in the study of properties of primes,
one of the main problems is to control, in a suitable sense, functions
with a very erratic behavior. A classical, and very useful, tool is
to study the so called explicit formulas of the average, weighted
or not, of such functions. The idea is to exploit the connection
of the problem with some properties of important special functions,
like the Riemann zeta function $\zeta\left(s\right)$. A very well-known
example is Goldbach's conjecture which, essentially, is the problem
to show that
\[
R(N):=\sum_{n\leq N}\Lambda\left(n\right)\Lambda\left(N-n\right)>0
\]
for every even number $N>2$, that is, the discrete convolution of
the von Mangoldt function
\[
\Lambda\left(n\right):=\begin{cases}
\log\left(p\right), & n=p^{m},\,p\text{ prime, }m\in\mathbb{N}\\
0, & \text{otherwise,}
\end{cases}
\]
with itself is positive for every even integer greater than $2$. For the study
of averages, weighted or not, of $R(N)$ and other similar function, in different settings, the interested reader may see, e.g., \cite{BKP2019, CGZ2022, CGZ2021, GS2023, GY2017, LZ2015}.

The Liouville function $\lambda\left(n\right)$, defined as
\[
\lambda(n):=\begin{cases}
1, & n=1\text{ or }n\text{ is the product of an even number of prime numbers,}\\
-1, & n\text{ is the product of an odd number of prime numbers,}
\end{cases}
\]
is surely one on the most studied arithmetical functions in recent
years, that attracted many number theory experts like %of the caliber of
Tao, Matom\"aki, Radziwi\l \l{} (see, for example, \cite{AHZ,MMT,MR1,MR2,MRT}),
just to recall a few important names. 

A very interesting problem linked to such function is surely the so-called
Chowla conjecture, which states that for all $h\geq1$ one has
\[
\frac{1}{h}\left|\sum_{n\leq x}\lambda\left(n\right)\lambda\left(n+h\right)\right|=o\left(x\right)
\]
as $x\rightarrow+\infty$ (see \cite{C}), which is morally linked,
when $h=2$, to the twin-prime asymptotic conjecture
\begin{equation}
\frac{1}{2}\sum_{n\leq x}\Lambda\left(n\right)\Lambda\left(n+2\right)=x\Pi_{2}+o\left(x\right)\label{eq: twin primes}
\end{equation}
as $x\rightarrow+\infty$, where $\Pi_{2}$ is the twin-prime constant. It is well-known
that (\ref{eq: twin primes}) is strictly linked to the Goldbach
conjecture in the form
\[
R(N)=N\mathfrak{S}\left(N\right)+o\left(N\right)
\]
as $N\rightarrow+\infty,$ where $\mathfrak{S}\left(N\right)$ is
the so-called singular series. Indeed, the two problems have, essentially,
similar difficulties, which have not yet allowed number theorist to
find a rigorous proof. From this observation, it was quite natural
to focus the attention on the convolution sum
\begin{equation}
\label{def-S}
S\left(N\right):=\sum_{n\leq N}\lambda\left(n\right)\lambda\left(N-n\right).
\end{equation}
In \cite{CK} Corr\'adi and K\'atai, from some heuristic ideas,
conjectured that $S\left(N\right)=o\left(N\right)$ as $N\rightarrow+\infty$
and this was proved in \cite{DkGK} under the assumption that there
are infinitely many Siegel zeros. In 2018 AIM workshop on Sarnak\textquoteright s
conjecture \cite{AIM} was posed the following conjecture:
\[
\left|S\left(N\right)\right|<N-1
\]
for every $N\geq2$. This was recently solved by Mangerel in \cite{M} for $N$
sufficiently large.

Motivated by these results, we asked ourselves if it was possible
to find explicit formulas for weighted averages regarding the function
$S\left(n\right)$, in line with what was done for the function that
counts Goldbach numbers $R\left(N\right)$ and the answer is positive,
under reasonable assumptions. 

Let
\begin{equation}
\label{def-L}
L\left(x\right):=\sum_{n\leq x}\lambda\left(n\right) \qquad \text{and} \qquad C_\lambda(x) := \sum_{n\leq x}S\left(n\right)\left(x-n\right).
\end{equation}
Indeed, in this paper we show a truncated explicit formula for the
Laplace convolution of $L\left(x\right)$
with itself, where such convolution involving $L\left(x\right)$ corresponds to $C_\lambda(x)$.

Then, taking advantage of the general results proven in \cite{CGZ},
we are able to find a truncated explicit formula for a general weighted
average of $S\left(n\right)$, where the weight verifies some reasonable
hypotheses. From this general formula, we provide, as examples, some
explicit formulas for the Dirichlet and power series $\sum_{n\geq1}S\left(n\right)n^{-s},\,\sum_{n\geq1}S\left(n\right)e^{-ny}$
in a very easy way. We also prove that $\sum_{n\geq1}S\left(n\right)n^{-s}$
can be analytically continued to the half-plane $\text{Re}(s)>1.$
%\textcolor{red}{Dare definizioni}

Then, we show that a general formula can be proved also for the weighted
averages of the $d-$term convolution function
\[
S_{d}\left(N\right):=\sum_{m_{1}+\dots+m_{d}=N}\lambda\left(m_{1}\right)\cdots\lambda\left(m_{d}\right),\,d\geq2.
\]
%In the last part, we show that, using some tools from the theory of
%distributions, the obtained results allow us to prove the non-trivial bound
%\[
%\sum_{n\leq x}S\left(n\right)=O\left(x\right)
%\]
%as $x\rightarrow+\infty$ which, in some sense, corroborates the reasonableness
%of the conjecture Corr\'adi and K\'atai. 

What we said about $\lambda(n)$ can be, essentially, derived in same
way for the M\"obius function $\mu(n)$. It is known that the Goldbach-type
problem for the function $\mu(n)$, that is, the evaluation of the
sum
\[
S^{*}(n):=\sum_{m_{1}+m_{2}=n}\mu\left(m_{1}\right)\mu\left(m_{2}\right)
\]
is linked to the function $R(n)$, due to the Dirichlet product;
that is, from
\[
\Lambda\left(n\right)=\sum_{d\vert n}\log\left(\frac{n}{d}\right)\mu\left(d\right)
\]
which means that a ``good control'' of
\[
\sum_{N_{1}m_{1}+N_{2}m_{2}=n}\mu\left(m_{1}\right)\mu\left(m_{2}\right)
\]
where $N_{1},N_{2}$ are positive integers, implies information about
$R(n)$. 

More precisely, one of our main results is the following.
\begin{thm}
Let $\eta>0$ and let $f:\mathbb{R}\rightarrow\mathbb{C}$. Assume
that:

1) $f$ has its support in $\left[a,b\right),\,a<b$, $a\in\mathbb{R},\,\eta a<1$
and $b\in\mathbb{R}\cup\left\{ +\infty\right\} $.

2) $f\in C^{1}\left(a,b\right)$.

3) $f^{\prime}$ is absolutely continuous in $\left(a,b\right)$.

4) $f\left(a^{+}\right),f^{\prime}\left(a^{+}\right)$ exist and are
finite, and $f\left(b^{-}\right)=f^{\prime}\left(b^{-}\right)=0$.

Then, assuming the
Riemann hypothesis and the simplicity of the non-trivial zeros of
$\zeta\left(s\right)$, we get
\[
\sum_{n\leq\eta b}\sum_{m\leq\eta b-n}\lambda\left(n\right)\lambda\left(m\right)f\left(\frac{m+n}{\eta}\right)=\frac{\pi\eta}{8\zeta\left(\frac{1}{2}\right)^{2}}\int_{a}^{b}f^{\prime\prime}\left(w\right)w^{2}dw
\]
\[
+\frac{\sqrt{\pi}}{\zeta\left(\frac{1}{2}\right)}\sum_{\rho}\frac{\zeta\left(2\rho\right)\Gamma\left(\rho\right)\eta^{\rho+1/2}}{\zeta^{\prime}\left(\rho\right)\Gamma\left(\rho+\frac{5}{2}\right)}\int_{a}^{b}f^{\prime\prime}\left(w\right)w^{\rho+1/2+1}dw
\]
\[
+\sum_{\rho_{1}}\frac{\zeta\left(2\rho_{1}\right)}{\zeta^{\prime}\left(\rho_{1}\right)}\sum_{\rho_{2}}\frac{\zeta\left(2\rho_{2}\right)\eta^{\rho_{1}+\rho_{2}}}{\zeta^{\prime}\left(\rho_{2}\right)}\frac{\Gamma\left(\rho_{1}\right)\Gamma\left(\rho_{2}\right)}{\Gamma\left(\rho_{1}+\rho_{2}+2\right)}\int_{a}^{b}f^{\prime\prime}\left(w\right)w^{\rho_{1}+\rho_{2}+1}dw
\]
\[
+O_{\varepsilon}\left(\eta^{1/2+\varepsilon}\int_{a}^{b}w^{3/2+\varepsilon}\left|f^{\prime\prime}\left(w\right)\right|dw+\int_{a}^{b}w\left|f^{\prime\prime}\left(w\right)\right|dw\right).
\]
In the case $a\eta \geq 1$, in the previous formula we have the extra
term
\[
L\left(\eta a\right)\int_{a}^{b}L\left(\eta v-\eta a\right)f^{\prime}\left(v\right)dv.
\]
\end{thm}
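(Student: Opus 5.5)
The argument has three stages. First we prove an explicit formula for the Laplace self-convolution of $L$. Then we pass from that convolution to the weighted double sum by integrating by parts twice against $f$, which is where the general machinery of \cite{CGZ} is used. Finally we control the errors.

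\textbf{Step 1: the explicit formula for $L$.} Under RH and simplicity of the zeros, Perron's formula applied to $\sum\lambda(n)n^{-s}=\zeta(2s)/\zeta(s)$ gives a truncated explicit formula
\[
L(x)=\frac{\sqrt{x}}{\zeta(1/2)}+\sum_{|\gamma|\le T}\frac{\zeta(2\rho)}{\rho\,\zeta'(\rho)}x^{\rho}+1+E(x,T).
\]
The residue at $s=1/2$ comes from the pole of $\zeta(2s)$. The constant $1$ is the residue at $s=0$, and there are further contributions from the trivial zeros. We then form
\[
(L*L)(x)=\int_0^x L(t)L(x-t)\,dt,
\]
substitute the explicit formula for each factor, and evaluate the resulting Beta integrals:
\[
\int_0^x t^{\alpha}(x-t)^{\beta}\,dt=\frac{\Gamma(\alpha+1)\Gamma(\beta+1)}{\Gamma(\alpha+\beta+2)}\,x^{\alpha+\beta+1}.
\]
The product of the two $\sqrt{\cdot}$ terms gives $\frac{\Gamma(3/2)^2}{\Gamma(3)}\frac{x^2}{\zeta(1/2)^2}=\frac{\pi x^2}{8\zeta(1/2)^2}$, which matches the main term after the weight is applied. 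Each cross term $\sqrt{t}\cdot(x-t)^{\rho}$ gives $\Gamma(3/2)\Gamma(\rho+1)/\Gamma(\rho+5/2)$. Counting it twice and using $\Gamma(\rho+1)/\rho=\Gamma(\rho)$ produces the coefficient $\sqrt\pi\,\Gamma(\rho)/\Gamma(\rho+5/2)$. The double sum over zeros produces the third term. Since $C_\lambda(x)=(L*L)(x)$ up to boundary terms, this yields a formula for $C_\lambda(x)$.

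\textbf{Step 2: convergence and the limit $T\to\infty$.} We must justify letting $T\to\infty$ inside the convolution. Under RH one has $\sum_{\rho}|\zeta(2\rho)/\zeta'(\rho)|\,|\Gamma(\rho)/\Gamma(\rho+5/2)|<\infty$, because the Gamma ratio decays like $|\gamma|^{-5/2}$, while $\zeta(2\rho)$ grows only polynomially on $\mathrm{Re}\,s=1$ and $1/\zeta'(\rho)$ is controlled on average. The same holds for the double sum, whose Gamma ratio gives the decay. This is the main obstacle. Pointwise bounds for $1/\zeta'(\rho)$ are not known, so I would pick $T$ along a sequence $T_k$ on which $1/\zeta(s)\ll T^{\varepsilon}$ on the horizontal segments; such a sequence exists under RH. I would then use the averaged bounds on $\sum_{|\gamma|\le T}|\zeta'(\rho)|^{-1}$ that follow from the simplicity assumption together with standard partial summation. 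Because the Laplace convolution integrates $E(x,T)$ against a bounded function, the truncation error contributes $o(1)$ as $T_k\to\infty$. The remaining non-oscillating contributions (the constant term, the trivial zeros, and $E$ at finite $x$) are collected into the bound
\[
O(x^{3/2+\varepsilon}+x).
\]
This bound uses the RH estimate $L(x)\ll x^{1/2+\varepsilon}$ for the cross terms with the constant.

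\textbf{Step 3: passing to the weight $f$.} We write the left side as
\[
\sum_{n\le \eta b}S^{\flat}(n)\,f(n/\eta), \qquad S^{\flat}(n)=\sum_{m+k=n}\lambda(m)\lambda(k),\quad m,k\ge1,
\]
and express it as a Stieltjes integral against the counting function. Integrating by parts twice, as in the general lemma of \cite{CGZ}, gives
\[
\int_a^b C_\lambda(\eta w)\,f''(w)\,dw.
\]
The hypotheses justify this step: $f\in C^1$, $f'$ is absolutely continuous, and $f(b^-)=f'(b^-)=0$. When $\eta a<1$, $C_\lambda(\eta a)$ vanishes together with its derivative, so no boundary terms survive. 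When $\eta a\ge1$, a single boundary contribution survives; it comes from the first integration by parts at $v=a$ and equals $L(\eta a)\int_a^b L(\eta v-\eta a)f'(v)\,dv$. We then insert the formula from Step 1 with $x=\eta w$, and the powers $\eta^{2}$, $\eta^{\rho+3/2}$, $\eta^{\rho_1+\rho_2+1}$ appear. The normalization in the statement absorbs one factor of $\eta$ through the change of variable in the Stieltjes integral. Interchanging the sum over zeros with $\int f''$ is justified by the absolute convergence from Step 2. The error term becomes
\[
O\Bigl(\int_a^b\bigl((\eta w)^{3/2+\varepsilon}+\eta w\bigr)|f''(w)|\,dw\Bigr),
\]
which after the normalization gives the stated remainder.
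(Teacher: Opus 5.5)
Your architecture matches the paper's. You derive an explicit formula for $L$, substitute it into $\int_0^x L(t)L(x-t)\,dt$, evaluate Beta integrals and prove absolute convergence of the zero sums. You then integrate by parts twice and rescale by $\eta$; your direct double Abel summation is a fine substitute for Theorem \ref{thm:MAINCGZ} when $\eta a<1$.

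The genuine gap is in Step 1. To get the \emph{complete} Beta integrals $\int_0^x t^{\alpha}(x-t)^{\beta}\,dt$, you must insert the truncated explicit formula at every $t\in(0,x)$, including $0<t<1$. The remainder there must be bounded uniformly in $T$, which is also what the cross-term estimates and the passage $T\to\infty$ need.

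The explicit formula you invoke is, as in the literature, established for $x\ge1$, and you never address $0<t<1$. There it is not automatic. Although $L(t)=0$, the trivial bound under SZ gives only
$\sum_{|\gamma|<T}|\zeta(2\rho)t^{\rho}/(\rho\zeta'(\rho))|\ll t^{1/2}(\log T)^{3/2}\log\log T$,
which is not uniform in $T$. So the part $t\in(0,1)$ of the convolution is uncontrolled. Moving the contour far to the left, as your mention of trivial-zero contributions suggests, also fails for $t<1$, because $t^{s}$ grows as $\text{Re}(s)\to-\infty$.

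The paper's Section 2 supplies exactly this step. Perron's formula holds for every $x>0$ with $\sigma_0=1+1/|\log x|$. The contour is moved only to $\text{Re}(s)=\varepsilon$, where $t^{\varepsilon}\le1$ for $t<1$ and the integrand is $\ll|\tau|^{-1-\varepsilon/3}$. The horizontal segments are taken along $T_\nu\in\mathcal{T}$. This gives Corollary \ref{cor:IMP}, with $R(x,T)\ll1+x(|\log x|+1)/T+(x-x^{1/4})/(T^{1-\varepsilon}\log x)$ for \emph{all} $x>0$; the bounded pieces $E(x)$ and $I(x)$ are absorbed. Incidentally, $\zeta(2s)/\zeta(s)$ has no poles at the trivial zeros, since $\zeta(2s)$ vanishes at every negative integer; the only residue to the left of $\text{Re}(s)=1/2$ is the value $1$ at $s=0$.

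Two further corrections.

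\emph{The averaged bound needs SZ, not simplicity.} The bound on $\sum_{0<\gamma<T}|\zeta'(\rho)|^{-1}$ does not follow from simplicity of the zeros. It is the separate hypothesis SZ of Conjecture \ref{conj:simple zeros}, which implies simplicity but is not known to follow from it. SZ is what gives $\sum_{0<\gamma<T}|\zeta(2\rho)/(\rho\zeta'(\rho))|\ll(\log T)^{3/2}\log\log T$. That bound is the input both for the single series and for the hypothesis of Theorem \ref{thm:double ser}, and the paper's proof uses SZ, not mere simplicity.

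\emph{The $\eta a\ge1$ boundary term.} Your double Abel summation of $\sum_N S(N)f(N/\eta)$ does not produce $L(\eta a)\int_a^b L(\eta v-\eta a)f'(v)\,dv$. With $A(x)=\sum_{N\le x}S(N)$, it produces $-f(a^{+})A((\eta a)^{-})+\eta^{-1}f'(a^{+})C_\lambda(\eta a)$ instead. The term in the statement is the boundary term of Theorem \ref{thm:MAINCGZ}, whose left-hand side runs only over $n>\eta a$ and whose inner integral starts at $\eta a$ rather than $0$. So your Step 3 does not justify that clause; the paper, too, only carries out the case $\eta a<1$.
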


%\textcolor{red}{Nella formula precedente per ipotesi $a\eta=1$: si pu\`o semplificare?}.
From this results, as a simple application, we derive what follows.
\begin{cor}
Assume the Riemann hypothesis and the simplicity of the non-trivial
zeros of $\zeta\left(s\right)$. Then, the Dirichlet series $\sum_{n\geq1}S\left(n\right)n^{-s}$
admits an analytic continuation up to $\text{Re}(s)>1$. If we also
assume that the imaginary parts of the non-trivial zeros are linearly
independent over the rational numbers, then the line $\text{Re}(s)=1$ is
the natural boundary of the series.
\end{cor}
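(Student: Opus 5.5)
The plan is to specialize the Theorem to a weight that turns the left-hand side into the Dirichlet series. Take $\eta=1$, $a=1/2$ (so that $\eta a<1$ and no extra term appears), $b=+\infty$, and $f(w)=w^{-s}$ on $[1/2,+\infty)$, with $f=0$ elsewhere. This $f$ satisfies hypotheses 1)--4) for $\mathrm{Re}(s)>0$. Since $m,n\geq1$ force $m+n\geq2>a$, the left side is
\[
\sum_{n\geq1}\sum_{m\geq1}\lambda(n)\lambda(m)(m+n)^{-s}=\sum_{N\geq2}S(N)N^{-s},
\]
and this converges absolutely for $\mathrm{Re}(s)>2$ because $|S(N)|\leq N$. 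On the right we have $f''(w)=s(s+1)w^{-s-2}$, so every integral can be computed in closed form:
\[
\int_{1/2}^{\infty}w^{\beta-s-1}\,dw=\frac{2^{s-\beta}}{s-\beta},\qquad \mathrm{Re}(s)>\mathrm{Re}(\beta),
\]
with $\beta=1$, $\beta=\rho+1/2$ and $\beta=\rho_1+\rho_2$ respectively. Under RH each of these exponents has real part $1$. So the three main terms become explicit sums of simple-pole functions $c_\beta\,s(s+1)2^{s-\beta}/(s-\beta)$, all of whose poles lie on $\mathrm{Re}(s)=1$.

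For the continuation I would not use the $O$-term as a mere bound. I would go back to its origin: it comes from $\int f''(w)E(\eta w)\,dw$, where $E(x)\ll x^{3/2+\varepsilon}+x$ is the remainder in the explicit formula for $C_\lambda$. With $f''(w)=s(s+1)w^{-s-2}$ this integral converges uniformly on compact subsets of $\mathrm{Re}(s)>1/2+\varepsilon$, so it is holomorphic there. For the sums over zeros, I would show that the series over $\rho$ and over $(\rho_1,\rho_2)$ converge absolutely and uniformly on compact subsets of $\mathrm{Re}(s)>1$. On such a set $|s-\rho_1-\rho_2|^{-1}\leq(\sigma-1)^{-1}$ and $|2^{s-\rho_1-\rho_2}|$ is bounded. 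Hence the required convergence reduces to that of
\[
\sum_{\rho_1,\rho_2}\left|\frac{\zeta(2\rho_1)\zeta(2\rho_2)}{\zeta'(\rho_1)\zeta'(\rho_2)}\right|\left|\frac{\Gamma(\rho_1)\Gamma(\rho_2)}{\Gamma(\rho_1+\rho_2+2)}\right|,
\]
which is exactly the convergence already needed to make the Theorem meaningful. That convergence comes from Stirling's decay of the Gamma ratio in $|\gamma_1|,|\gamma_2|$ together with the control of $1/\zeta'(\rho)$ used in proving the Theorem. By the identity theorem, the right-hand side then gives the continuation of $\sum S(n)n^{-s}$ from $\mathrm{Re}(s)>2$ to $\mathrm{Re}(s)>1$.

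For the natural boundary under linear independence (LI) of the ordinates $\gamma$, I would argue in two steps.

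\emph{The candidate singular points are dense.} These are $s=1$, $s=1+i\gamma$ and $s=1+i(\gamma_1+\gamma_2)$. Since zeros come in conjugate pairs, the last family contains all differences $\gamma_1-\gamma_2$. These are dense in $\mathbb{R}$: consecutive gaps between ordinates tend to $0$ on average (the mean gap near $T$ is about $2\pi/\log T$). So for any target $t$ and any $\delta>0$ there are ordinates $\gamma,\gamma'$ with $|\gamma'-\gamma-t|<\delta$.

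\emph{Each candidate point is a genuine singularity.} Under LI, a real number $\tau\neq0$ has at most one representation $\tau=\gamma_1+\gamma_2$ up to order, and it cannot also be a single ordinate. Differences $\gamma-\gamma'$ must be handled with care, since $\gamma_1+\gamma_2=0$ does occur; I would simply exclude $\tau=0$. For every other such $\tau$ the coefficient of $1/(s-1-i\tau)$ is nonzero: $\zeta(2\rho)\neq0$ on $\mathrm{Re}(s)=1$, $\zeta'(\rho)$ is finite, and the Gamma factors do not vanish.

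Then let $s=\sigma+i\tau$ with $\sigma\to1^+$. The distinguished term grows like $(\sigma-1)^{-1}$. I expect the main obstacle to be showing that the remaining terms are $o\bigl((\sigma-1)^{-1}\bigr)$. Each summand times $(\sigma-1)$ is bounded by its absolute coefficient and tends to $0$ pointwise, so dominated convergence applied with the absolutely convergent majorant from the previous paragraph gives exactly this. Hence every point of a dense subset of $\mathrm{Re}(s)=1$ is a singularity, and the line is a natural boundary.
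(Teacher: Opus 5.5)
Your argument is correct in substance and follows the same strategy as the paper: specialize the weighted formula to $f(w)=w^{-s}$, use absolute convergence of the series over zeros, and use the density of $\mathcal{K}=\{\gamma_1+\gamma_2\}$ for the boundary. It is, however, more careful at the points where the paper is sketchy.

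\begin{itemize}
\item The paper takes $a=1/\eta$, so it must carry and then bound the boundary term $L(1)\int L(\eta v-1)f'(v)\,dv$. Your choice $\eta=1$, $a=1/2$ removes that term.
\item The paper records the remainder only as $O\bigl(|s(s+1)|/(\mathrm{Re}(s)-1/2-\varepsilon)\bigr)$, which by itself does not give a continuation. Your observation is exactly what is needed: the remainder equals $s(s+1)\int_{1/2}^{\infty}w^{-s-2}E(w)\,dw$ with $E(w)\ll w^{3/2+\varepsilon}+w$, so it is holomorphic for $\mathrm{Re}(s)>1/2+\varepsilon$.
\item For the natural boundary, the paper only says that $\mathcal{K}$ is dense under LI and refers to Egami--Matsumoto. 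Since every candidate point $\beta$ lies on the line $\mathrm{Re}(s)=1$ itself, density alone is not enough. Your argument supplies the missing step: LI gives a unique representation of each $\tau\in\mathcal{K}\setminus\{0\}$, so the coefficient of $1/(s-1-i\tau)$ is a nonzero product, and dominated convergence shows that $(\sigma-1)F(\sigma+i\tau)$ tends to this nonzero constant. In your version LI is used for non-cancellation rather than for density, which is its logically correct role.
\end{itemize}

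The one step you must repair is the density of the differences $\gamma-\gamma'$. A mean gap of $2\pi/\log T$ does not imply that every window of fixed length $2\delta$ contains an ordinate. A hypothetical sequence of ordinates clustered near the integers can have the same asymptotic count $N(T)\sim\frac{T}{2\pi}\log T$ while all its differences lie near $\mathbb{Z}$. What you need is a local statement, and under RH it is available. Littlewood's bound $S(T)\ll\log T/\log\log T$ gives $N(T+2\delta)-N(T)=\frac{\delta}{\pi}\log T+O(\log T/\log\log T)>0$ for large $T$. Hence for every sufficiently large ordinate $\gamma$ there is an ordinate in $(\gamma+t-\delta,\gamma+t+\delta)$. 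With that substitution the proof is complete.
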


%Concerning a non-trivial bound for the average of $S\left(n\right)$,
%we get what follows.
%\begin{thm}
%Assume the Riemann hypothesis and the simplicity of the non-trivial
%zeros of $\zeta\left(s\right)$. Then
%\[
%\sum_{n\leq x}S\left(n\right)=O\left(x\right)
%\]
%as $x\rightarrow+\infty$.
%\end{thm}

The same kind of results can be reached for the function $S^{*}(n)$,
with the same ideas.

\section{preliminary results}

In this part we present some preliminary results. The idea to prove
the main theorem is to exploit Proposition 2 of \cite{CGZ} but, as
we know from the same paper, the starting point is to find
the explicit formula of
\[
  C_\lambda(x)
  =
  \sum_{n\leq x}S\left(n\right)\left(x-n\right)=\int_{0}^{x}L(y)L(x-y)dy
\]
where $L(y)$ is defined in \eqref{def-L}, and the same
holds for $M(y):=\sum_{n\leq y}\mu(n)$. So, the first part of this
section is to prove some results for this aim. The idea is to insert
the explicit formula of $L\left(y\right)$ in the integral and then
integrating termwise, but it is evident that in such type of approach
the contribution when $y$ is close to $0$ is important as much as
that when $y$ is near $x$. In order to obtain combination of series
over the non-trivial zeros with tractable special functions (essentially
the Gamma and Beta function), it is important to study the explicit
formula of $L\left(y\right)$ not only in the classical range $y\geq1$
but also for $y>0$. Luckily, for what concerns $L\left(y\right)$, this
extension is possible and does not involve changes in the law, as
happens with $\psi\left(x\right):=\sum_{n\leq x}\Lambda\left(n\right).$
The same argument and considerations hold for $M(y)$.

With this idea in mind, we need to recall the following results.
\begin{lem}
(Perron's formula).

Let $\sigma_{a}$ be the abscissa of absolute convergence of the Dirichlet
series 
\[
\alpha(s):=\sum_{n\geq1}a_{n}n^{-s}
\]
 and $x>0,\,T\geq1$. Then, if $\text{\ensuremath{\sigma_{0}>\max\left(\sigma_{a},0\right)}}$,
we have
\[
\sum_{n\leq x}a_{n}=\frac{1}{2\pi i}\int_{\sigma_{0}-iT}^{\sigma_{0}+iT}\alpha\left(s\right)\frac{x^{s}}{s}ds+E\left(x\right)+R\left(x,T\right)
\]
where 
\[
E\left(x\right)=\begin{cases}
a_{x}/2, & x\in\mathbb{N}\\
0, & \text{otherwise}
\end{cases}
\]
and
\[
R\left(x,T\right)\ll\sum_{\underset{{\scriptstyle n\neq x}}{x/2<n<2x}}\left|a_{n}\right|\min\left(1,\frac{x}{T\left|x-n\right|}\right)+\frac{4^{\sigma_{0}}+x^{\sigma_{0}}}{T}\sum_{n\geq1}\frac{\left|a_{n}\right|}{n^{\sigma_{0}}}.
\]
\end{lem}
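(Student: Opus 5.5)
This is the classical truncated Perron formula, and I would prove it in the standard way. The starting point is the discontinuous integral: for $y>0$, $\sigma_0>0$, $T\geq1$,
\[
\frac{1}{2\pi i}\int_{\sigma_0-iT}^{\sigma_0+iT}\frac{y^{s}}{s}\,ds=\delta(y)+O\left(y^{\sigma_0}\min\left(1,\frac{1}{T\left|\log y\right|}\right)\right),
\]
where $\delta(y)=1$ for $y>1$, $\delta(1)=1/2$, and $\delta(y)=0$ for $y<1$. For $y=1$ the error is $O(\sigma_0/T)$.

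To prove this, when $y>1$ I would shift the segment to the left, closing with a rectangle whose left side is at $\text{Re}(s)=-U$ with $U\to\infty$. I pick up the residue $1$ at $s=0$. The two horizontal sides contribute $\ll \int_{-\infty}^{\sigma_0} y^{u}/T\,du = y^{\sigma_0}/(T\log y)$. For the bound by $y^{\sigma_0}$ itself (small $|\log y|$), I would instead replace the segment by an arc of the circle centred at $0$ through $\sigma_0\pm iT$, lying to the left, and bound the integrand trivially. The case $y<1$ is symmetric: shift to the right, with no residue. For $y=1$ I would compute directly: $\frac{1}{2\pi}\int_{-T}^{T}\frac{dt}{\sigma_0+it}=\frac{1}{\pi}\arctan(T/\sigma_0)=\frac12+O(\sigma_0/T)$.

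Next, since $\sigma_0>\sigma_a$, the series $\alpha(s)$ converges absolutely and uniformly on the segment, so I can interchange sum and integral with $y=x/n$. This gives
\[
\frac{1}{2\pi i}\int_{\sigma_0-iT}^{\sigma_0+iT}\alpha(s)\frac{x^{s}}{s}ds=\sum_{n<x}a_n+\frac{a_x}{2}\mathbf{1}_{x\in\mathbb{N}}+O\left(\sum_{n\ge1}|a_n|\left(\frac{x}{n}\right)^{\sigma_0}\min\left(1,\frac{1}{T|\log(x/n)|}\right)\right),
\]
where the $n=x$ term is handled with the $O(\sigma_0/T)\,|a_x|$ error. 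Since $\sum_{n<x}a_n+\tfrac{a_x}{2}=\sum_{n\le x}a_n-E(x)$, this reorganises into the claimed identity $\sum_{n\le x}a_n=\frac{1}{2\pi i}\int\cdots+E(x)+R(x,T)$ once the error is bounded.

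The remaining step, and the only delicate one, is to split the error sum so that it matches the stated form of $R(x,T)$.

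For $n\le x/2$ or $n\ge 2x$, we have $|\log(x/n)|\ge\log2$. These terms contribute $\ll \frac{x^{\sigma_0}}{T}\sum|a_n|n^{-\sigma_0}$.

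For $x/2<n<2x$ with $n\ne x$, the factor $(x/n)^{\sigma_0}\le 2^{\sigma_0}$ is absorbed into the constant, which may depend on $\sigma_0$. Using $|\log(x/n)|\asymp|x-n|/x$, these terms are $\ll |a_n|\min\left(1,\frac{x}{T|x-n|}\right)$.

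The possible term $n=x$ contributes $\sigma_0|a_x|/T$. Since $x^{\sigma_0}|a_x|x^{-\sigma_0}\le x^{\sigma_0}\sum|a_n|n^{-\sigma_0}$, it is absorbed into the second part of $R$. The $4^{\sigma_0}$ in the statement merely covers small $x$ (e.g. $x<2$), where I would bound the range-dependent factors crudely. This is exactly the form of $R(x,T)$ stated in the lemma.
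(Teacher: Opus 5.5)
Your proof is correct and is the standard textbook argument: the discontinuous integral with error $y^{\sigma_0}\min(1,1/(T|\log y|))$ for $y\neq 1$ and $\tfrac12+O(\sigma_0/T)$ at $y=1$, termwise integration, and a split at $x/2$ and $2x$.

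The paper proves nothing here; it only cites Theorem 5.2 and Corollary 5.3 of Montgomery--Vaughan. Their route differs in the range $x/2<n<2x$. There the segment is essentially compared with the integral along the imaginary axis. This produces terms $\pm\frac{1}{\pi}a_n\,\mathrm{si}(T|\log(x/n)|)\ll |a_n|\min(1,x/(T|x-n|))$ that carry no factor $(x/n)^{\sigma_0}$. The connecting horizontal segments, of size $T^{-1}\int_0^{\sigma_0}(x/n)^{\sigma}\,d\sigma$, are where exponential factors in $\sigma_0$ such as $4^{\sigma_0}$ enter.

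You instead absorb $(x/n)^{\sigma_0}\le 2^{\sigma_0}$, and the factor $\sigma_0$ from the $n=x$ term, into the implied constant. So your constant depends on $\sigma_0$, and the $4^{\sigma_0}$ in the statement becomes idle; your remark that it covers small $x$ is unnecessary but harmless. This reading is legitimate, and some $\sigma_0$-dependence is in fact unavoidable. Take $a_n=\mathbf{1}_{n=N}$ and $x=N\ge 5$: then exactly $R(x,T)=\frac{1}{\pi}\arctan(\sigma_0/T)$, while the stated right-hand side is $(1+(4/N)^{\sigma_0})/T$. Choosing $\sigma_0=T$ defeats any absolute constant.

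What your route buys, besides simplicity, is the intermediate bound you already have before absorbing anything. It holds with an absolute constant:
\[
R(x,T)\ll\sum_{x/2<n<2x,\ n\neq x}|a_n|\Bigl(\frac{x}{n}\Bigr)^{\sigma_0}\min\Bigl(1,\frac{x}{T|x-n|}\Bigr)+\frac{x^{\sigma_0}}{T}\sum_{n\geq1}\frac{|a_n|}{n^{\sigma_0}}+\frac{\sigma_0|a_x|}{T}\mathbf{1}_{x\in\mathbb{N}}.
\]
This form is better suited than the stated one to the paper's next corollary. There $\sigma_0=1+1/|\log x|$ is unbounded as $x\to 1$, yet $x^{\sigma_0}\asymp x$. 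So it is worth keeping this version rather than absorbing $2^{\sigma_0}$.
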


For a reference, see Theorem $5.2$ and Corollary $5.3$ of \cite{MV}.
We underline that the previous result holds for every $x>0$.

Before starting considering the study the convolution involving $L(x)$,
we introduce a technical theorem that will be useful later.
\begin{thm}
\label{thm:double ser}Assume the Riemann hypothesis (RH), let $\rho$
run over the non-trivial zeros of $\zeta(s)$ and let $f(z)$ be a
complex function defined on the set of the
non-trivial zeros and such that
\[
\sum_{0<\gamma<T}\left|\frac{f\left(\rho\right)}{\rho}\right|=o\left(T^{\alpha}\right),\sum_{0<\gamma<T}\left|\frac{f\left(\overline{\rho}\right)}{\overline{\rho}}\right|=o\left(T^{\alpha}\right)
\]
as $T\rightarrow+\infty,$ for every $\alpha>0.$ Then the double
series
\[
\sum_{\rho_{1}}f\left(\rho_{1}\right)\sum_{\rho_{2}}f\left(\rho_{2}\right)\frac{\Gamma\left(\rho_{1}\right)\Gamma\left(\rho_{2}\right)}{\Gamma\left(\rho_{1}+\rho_{2}+1+k\right)}
\]
converges absolutely if $k\in\mathbb{R},$ $k>1/2$.
\end{thm}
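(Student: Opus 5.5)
We aim to bound the general term with Stirling's formula and then sum over dyadic blocks in $\gamma_1,\gamma_2$. Under RH write $\rho_j=\frac12+i\gamma_j$, so $\rho_1+\rho_2+1+k=2+k+i(\gamma_1+\gamma_2)$. For $|t|\ge1$, Stirling gives, uniformly in bounded vertical strips,
\[
|\Gamma(\sigma+it)|\asymp |t|^{\sigma-1/2}e^{-\pi|t|/2}.
\]
Hence
\[
\left|\frac{\Gamma(\rho_1)\Gamma(\rho_2)}{\Gamma(\rho_1+\rho_2+1+k)}\right|\asymp \frac{e^{-\frac{\pi}{2}(|\gamma_1|+|\gamma_2|-|\gamma_1+\gamma_2|)}}{(1+|\gamma_1+\gamma_2|)^{3/2+k}}.
\]
All $\gamma$ satisfy $|\gamma|\ge 14$, so the bounds for the numerator factors hold throughout. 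When $\gamma_1+\gamma_2$ is small, the denominator Gamma is bounded below by a positive constant. This is true because $\Gamma$ has no zeros, and $2+k>0$ keeps us away from poles.

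We split into two cases.

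\emph{Opposite signs.} If $\gamma_1\gamma_2<0$, say $\gamma_1>0>\gamma_2$, then $|\gamma_1|+|\gamma_2|-|\gamma_1+\gamma_2|=2\min(|\gamma_1|,|\gamma_2|)$. The exponential factor is therefore $e^{-\pi\min(|\gamma_1|,|\gamma_2|)}$. This decay in the smaller ordinate dominates any subexponential growth of $f(\rho_1)f(\rho_2)$. Concretely, take dyadic blocks $T_1<\gamma_1\le 2T_1$ and $T_2<|\gamma_2|\le 2T_2$. The hypothesis (with $|\rho|\asymp\gamma$) gives
\[
\sum_{T<\gamma\le 2T}|f(\rho)|=o(T^{1+\alpha}).
\]
The block contribution is then $\ll T_1^{1+\alpha}T_2^{1+\alpha}e^{-\pi\min(T_1,T_2)}$, and this is summable over both dyadic scales. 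The worst case is $T_2\approx T_1$; otherwise one factor is exponentially small. The only delicate point here is that the polynomial factor in the larger variable is bounded only by $1$, not decaying. Still, $T_{\max}^{1+\alpha}e^{-\pi T_{\min}}$ summed over $T_{\max}$ diverges. So we must use the denominator $(1+|\gamma_1+\gamma_2|)^{-3/2-k}\approx T_{\max}^{-3/2-k}$ when $T_{\max}\ge 4T_{\min}$. This gives a factor $T_{\max}^{-1/2-k+\alpha}$, summable since $k>-1/2$, which certainly holds here.

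\emph{Same sign.} If $\gamma_1,\gamma_2>0$ (the negative case is symmetric, using the hypothesis on $f(\bar\rho)$), the exponential factor is $1$ and the term is $\asymp |f(\rho_1)f(\rho_2)|(\gamma_1+\gamma_2)^{-3/2-k}$. On the dyadic block $(T_1,T_2)$ we have $\gamma_1+\gamma_2\asymp\max(T_1,T_2)$, so the contribution is
\[
\ll T_1^{1+\alpha}T_2^{1+\alpha}\max(T_1,T_2)^{-3/2-k}.
\]
Summing over $T_2\le T_1$ gives $\ll T_1^{2+2\alpha-3/2-k}=T_1^{1/2-k+2\alpha}$, and summing over dyadic $T_1$ converges once $\alpha<(k-1/2)/2$. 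This is exactly where $k>1/2$ enters, and it is the main step. Because the hypothesis holds for every $\alpha>0$, we may choose such an $\alpha$.

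It remains to verify that the Stirling asymptotics are uniform. The real parts involved are $1/2$ and $2+k$, both fixed, so uniformity holds. The bounded region $|\gamma_1+\gamma_2|\le1$ is covered by the lower bound on $|\Gamma|$ noted above, and it is absorbed into the same dyadic estimates. Combining the two cases gives absolute convergence.
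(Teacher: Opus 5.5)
Your proof is correct. It shares the paper's skeleton: Stirling's formula, a split according to the signs of $\gamma_1,\gamma_2$, the decay $(\gamma_1+\gamma_2)^{-3/2-k}$ when the signs agree, and exponential decay when they differ. The two technical steps are carried out differently, though.

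\emph{Same-sign case.} The paper uses no decomposition. It applies the arithmetic--geometric mean inequality $(\gamma_1+\gamma_2)^{3/2+k}\gg(\gamma_1\gamma_2)^{3/4+k/2}$ to factor the double series into a product of two copies of $\sum_{\gamma>0}|f(\rho)/\rho|\,\gamma^{-(k/2-1/4)}$. Each copy converges by partial summation from the hypothesis. You sum over dyadic blocks instead, which reaches the same threshold $k>1/2$ with more bookkeeping.

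\emph{Opposite-sign case.} The paper introduces an auxiliary $\delta$. On the near-diagonal range, where $\gamma_1$ and $|\gamma_2|$ differ by less than $\delta\max(\gamma_1,|\gamma_2|)$, it uses the lower bound $|\Gamma(x+iy)|\ge\Gamma(x)(\sech(\pi y))^{1/2}$, paying a factor $e^{\delta\pi(\gamma_1+|\gamma_2|)/2}$. Off that range it uses Stirling. You instead note that $|\Gamma(2+k+it)|\asymp(1+|t|)^{3/2+k}e^{-\pi|t|/2}$ holds uniformly for all real $t$, since $\Gamma$ does not vanish on compacts. That collapses both subcases into the single factor $e^{-\pi\min(|\gamma_1|,|\gamma_2|)}(1+|\gamma_1+\gamma_2|)^{-3/2-k}$. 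It is cleaner and shows directly that only $k>-1/2$ is needed there.

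So the paper's route gives a very short same-sign argument, and yours gives a uniform treatment with no $\delta$-splitting.

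One expository point: the sentence ``the worst case is $T_2\approx T_1$'' is backwards. Comparable scales are the easy case, because both ordinates are large and $e^{-\pi T_{\min}}$ dominates everything. The critical configuration is $T_{\min}$ small and $T_{\max}$ large, which you then handle correctly via the denominator.
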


\begin{proof}
We want to prove that
\begin{equation}
\label{double-sum}
\sum_{\rho_{1}}\left|f\left(\rho_{1}\right)\right|\sum_{\rho_{2}}\left|f\left(\rho_{2}\right)\right|\left|\frac{\Gamma\left(\rho_{1}\right)\Gamma\left(\rho_{2}\right)}{\Gamma\left(\rho_{1}+\rho_{2}+1+k\right)}\right|
\end{equation}
is finite.
By the symmetries of the zeros, it is enough to consider the cases
$\gamma_{1}>0,\gamma_{2}>0$ and $\gamma_{1}>0,\gamma_{2}<0.$ Let
us consider the first configuration. From Stirling's formula
\[
\left|\Gamma\left(x+iy\right)\right|\sim\sqrt{2\pi}e^{-\pi\left|y\right|/2}\left|y\right|^{x-1/2},
\]
where $x\in\left[x_{1},x_{2}\right]$ and $\left|y\right|\rightarrow+\infty$,
we have that the contribution of these zeros to \eqref{double-sum} is
\[
  \ll
\sum_{\rho_{1}:\gamma_{1}>0}\left|f\left(\rho_{1}\right)\right|\sum_{\rho_{2}:\gamma_{2}>0}\left|f\left(\rho_{2}\right)\right|\frac{1}{\left(\gamma_{1}+\gamma_{2}\right)^{3/2+k}}
\]
\[
\ll\sum_{\rho_{1}:\gamma_{1}>0}\left|f\left(\rho_{1}\right)\right|\frac{1}{\gamma_{1}^{3/4+k/2}}\sum_{\rho_{2}:\gamma_{2}>0}\left|f\left(\rho_{2}\right)\right|\frac{1}{\gamma_{2}^{3/4+k/2}}
\]
\[
\ll\sum_{\rho_{1}:\gamma_{1}>0}\left|\frac{f\left(\rho_{1}\right)}{\rho_{1}}\right|\frac{1}{\gamma_{1}^{k/2-1/4}}\sum_{\rho_{2}:\gamma_{2}>0}\left|\frac{f\left(\rho_{2}\right)}{\rho_{2}}\right|\frac{1}{\gamma_{2}^{k/2-1/4}}
\]
by the arithmetic-geometric inequality and the series converges absolutely if $k>1/2$ since, by partial
summation
\[
\sum_{\rho:0<\gamma<T}\left|\frac{f\left(\rho\right)}{\rho}\right|\frac{1}{\gamma^{k/2-1/4}}=\sum_{\rho:0<\gamma<T}\left|\frac{f\left(\rho\right)}{\rho}\right|\frac{1}{T^{k/2-1/4}}-\frac{2k-1}{4}\int_{14}^{T}\sum_{\rho:0<\gamma<t}\left|\frac{f\left(\rho\right)}{\rho}\right|t^{-k/2-3/4}dt.
\]
The convergence follows from our hypothesis on $f$. Now let us consider the case $\gamma_{1}>0,\,\gamma_{2}<0$.
Then we have to deal with
\[
\sum_{\rho_{1}:\gamma_{1}>0}\left|f\left(\rho_{1}\right)\right|\sum_{\rho_{2}:\gamma_{2}>0}\left|f\left(\overline{\rho_{2}}\right)\right|\left|\frac{\Gamma\left(\rho_{1}\right)\Gamma\left(\overline{\rho_{2}}\right)}{\Gamma\left(\rho_{1}+\overline{\rho_{2}}+1+k\right)}\right|.
\]
Let $0<\delta<1$ be fixed. We split the second series in the following
way
\[
\sum_{\rho_{2}:\gamma_{2}>0}=\sum_{\underset{{\scriptstyle \left|\gamma_{1}-\gamma_{2}\right|<\delta\max\left(\gamma_{1},\gamma_{2}\right)}}{\rho_{2}:\gamma_{2}>0}}+\sum_{\underset{{\scriptstyle \left|\gamma_{1}-\gamma_{2}\right|\geq\delta\max\left(\gamma_{1},\gamma_{2}\right)}}{\rho_{2}:\gamma_{2}>0}}
\]
\[
=:\sideset{}{_{1}}\sum+\sideset{}{_{2}}\sum.
\]
 We start with $\sideset{}{_{1}}\sum$. Recalling the relation
\[
\left|\Gamma\left(x+iy\right)\right|\geq\Gamma\left(x\right)
  \bigl(\sech\left(\pi y\right)\bigr)^{1/2},\,x\geq1/2,
\]
(see \cite{O}, formula $5.6.7$) we observe that
%\footnote{Concordo con il risultato finale, ma non riesco a seguire i dettagli (AZ)}
\[
\left|\Gamma\left(\rho_{1}+\overline{\rho_{2}}+1+k\right)\right|\geq\min_{-\delta\max\left(\gamma_{1},\gamma_{2}\right)\leq t\leq\delta\max\left(\gamma_{1},\gamma_{2}\right)}\left|\Gamma\left(2+k+it\right)\right|
\]
\[
\geq\Gamma\left(2+k\right)\min_{0\leq t\leq\delta\max\left(\gamma_{1},\gamma_{2}\right)}
  \bigl(\sech\left(\pi t\right)\bigr)^{1/2}\gg_{k}e^{-\delta\pi\left(\gamma_{1}+\gamma_{2}\right)/2}
\]
so, using Stirling's formula in the Gamma functions at the numerator, we get
\[
\sum_{\rho_{1}:\gamma_{1}>0}\left|f\left(\rho_{1}\right)\right|\sideset{}{_{1}}\sum\left|f\left(\overline{\rho_{2}}\right)\right|\left|\frac{\Gamma\left(\rho_{1}\right)\Gamma\left(\overline{\rho_{2}}\right)}{\Gamma\left(\rho_{1}+\overline{\rho_{2}}+1+k\right)}\right|
\]
\[
\ll_{k}\sum_{\rho_{1}:\gamma_{1}>0}\left|f\left(\rho_{1}\right)\right|e^{-\pi\left(1-\delta\right)\gamma_{1}/2}\sum_{\rho_{2}:\gamma_{2}>0}\left|f\left(\overline{\rho_{2}}\right)\right|e^{-\pi\left(1-\delta\right)\gamma_{2}/2}
\]
and the convergence follows trivially. Now we consider $\sideset{}{_{2}}\sum.$
If $\left|\gamma_{1}-\gamma_{2}\right|\geq\delta\max\left(\gamma_{1},\gamma_{2}\right)>0$,
then $\gamma_{1}\neq\gamma_{2}$. We split $\sideset{}{_{2}}\sum$
in two further cases: if $\gamma_{1}>\gamma_{2}$ we have, by Stirling's formula again, that 
\[
\sum_{\rho_{1}:\gamma_{1}>0}\left|f\left(\rho_{1}\right)\right|\sum_{\rho_{2}:\gamma_{1}-\gamma_{2}\geq\delta\gamma_{1}}\left|f\left(\overline{\rho_{2}}\right)\right|\frac{e^{-\pi\gamma_{2}/2}e^{-\pi\gamma_{1}/2}}{\left(\gamma_{1}-\gamma_{2}\right)^{3/2+k}e^{-\pi\left(\gamma_{1}-\gamma_{2}\right)/2}}
\]
\[
\ll_{\delta}\sum_{\rho_{1}:\gamma_{1}>0}\left|f\left(\rho_{1}\right)\right|\frac{1}{\gamma_{1}^{3/2+k}}\sum_{\rho_{2}:\gamma_{2}>0}\left|f\left(\overline{\rho_{2}}\right)\right|e^{-\pi\gamma_{2}}
\]
and again the convergence follows. If $\gamma_{1}<\gamma_{2}$ we
get, similarly to the previous case
\[
\sum_{\rho_{1}:\gamma_{1}>0}\left|f\left(\rho_{1}\right)\right|\sum_{\rho_{2}:\gamma_{2}-\gamma_{1}\geq\delta\gamma_{2}}\left|f\left(\overline{\rho_{2}}\right)\right|\frac{e^{-\pi\gamma_{2}/2}e^{-\pi\gamma_{1}/2}}{\left(\gamma_{2}-\gamma_{1}\right)^{3/2+k}e^{-\pi\left(\gamma_{2}-\gamma_{1}\right)/2}}
\]
\[
\ll_{\delta}\sum_{\rho_{1}:\gamma_{1}>0}\left|f\left(\rho_{1}\right)\right|e^{-\pi\gamma_{1}}\sum_{\rho_{2}:\gamma_{2}>0}\left|f\left(\overline{\rho_{2}}\right)\right|\frac{1}{\gamma_{2}^{3/2+k}}
\]
and then the convergence of the series.
\end{proof}
Note that, probably, the assumption of RH can be avoided, but it is
not important for our aims. Now, we need some results about the explicit
formula for $L(y)$ for $y>0$. In the literature we can find results
that hold only for $y\geq1$, so we need to extend such results also
in the case $0<y<1$. We start with the following corollary.
\begin{cor}
For $x>0$, $T\geq1$ and 
\begin{equation}
\sigma_{0}=\sigma_{0}\left(x\right):=\begin{cases}
1+1/\left|\log\left(x\right)\right|, & x>0,\,x\neq1\\
2, & x=1
\end{cases}\label{eq:def sigma0}
\end{equation}
 we have that 
\[
L\left(x\right)=\frac{1}{2\pi i}\int_{\sigma_{0}-iT}^{\sigma_{0}+iT}\frac{\zeta\left(2s\right)}{\zeta\left(s\right)}\frac{x^{s}}{s}ds+E\left(x\right)+R\left(x,T\right)
\]
where 
\[
E(x):=\begin{cases}
\lambda\left(x\right)/2, & x\in\mathbb{N}\\
0, & \text{otherwise}
\end{cases}
\]
and
\[
R\left(x,T\right)\ll1+\frac{x\left(\left|\log\left(x\right)\right|+1\right)}{T}.
\]
\end{cor}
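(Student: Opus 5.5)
We apply Perron's formula (the Lemma above) with $a_n=\lambda(n)$. Since $\sum_{n\geq1}\lambda(n)n^{-s}=\zeta(2s)/\zeta(s)$ for $\text{Re}(s)>1$, the abscissa of absolute convergence is $\sigma_a=1$. The choice $\sigma_0=\sigma_0(x)>1$ in \eqref{eq:def sigma0} is admissible for every $x>0$, since $\sigma_0>\max(\sigma_a,0)$. The main term and $E(x)$ are then exactly as stated, and it remains to bound $R(x,T)$. We do this term by term, using $|a_n|\leq1$ throughout.

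\emph{The tail term.} Since $|\lambda(n)|=1$,
\[
\sum_{n\geq1}\frac{|\lambda(n)|}{n^{\sigma_0}}=\zeta(\sigma_0)\ll\frac{1}{\sigma_0-1}+1 .
\]
For $x\neq1$ this gives $\zeta(\sigma_0)\ll|\log x|+1$. For $x=1$ we have $\sigma_0=2$, so $\zeta(\sigma_0)\ll1$. Next we bound $4^{\sigma_0}+x^{\sigma_0}$. We have $x^{\sigma_0}=x\cdot x^{1/|\log x|}$, and $x^{1/|\log x|}$ equals $e$ if $x>1$ and $e^{-1}$ if $x<1$; hence $x^{\sigma_0}\ll x$. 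The term $4^{\sigma_0}$ needs care, because $\sigma_0$ blows up as $x\to1$. When $|\log x|\geq1/10$ (say) we have $\sigma_0\leq11$, so $4^{\sigma_0}\ll1$, and the tail term is
\[
\ll\frac{(1+x)(|\log x|+1)}{T}\ll1+\frac{x(|\log x|+1)}{T},
\]
using $T\geq1$ for the constant part and $x\gg1$ or $x\ll1$ appropriately. When $x$ is close to $1$ but $x\neq1$, the quantity $4^{\sigma_0}/(\sigma_0-1)$ is genuinely large. I expect this region to be the main obstacle; the factor $4^{\sigma_0}$ really comes from the proof of the Lemma. The first remedy I would try is to choose $\sigma_0$ freely there: the left-hand side does not depend on $\sigma_0$, and moving the segment between $\sigma_0(x)$ and $\sigma_0=2$ only costs the horizontal integrals. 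These are bounded by $\int_{1}^{\sigma_0}|\zeta(2s)/\zeta(s)|\,x^{\sigma}T^{-1}d\sigma$, and $\zeta(2s)/\zeta(s)$ is bounded on $\text{Re}(s)\geq1$, $|t|=T$, up to $T^{\varepsilon}$-type factors. More simply, I would observe that for $1/2\leq x\leq2$ the sum $L(x)$ is $O(1)$. Hence it suffices to show that the truncated Perron integral is also $O(1)$ there, which follows by applying the Lemma with $\sigma_0=2$ (all terms then trivially $O(1)$) and shifting to the line $\sigma_0(x)$ as just described. Either way, the near-$1$ range contributes $O(1)$, which the bound absorbs.

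\emph{The short-range sum.} The first term of $R(x,T)$ is at most
\[
\sum_{x/2<n<2x,\ n\neq x}\min\Bigl(1,\frac{x}{T|x-n|}\Bigr).
\]
If $x<1/2$ the sum is empty. Otherwise, the at most two integers $n$ nearest to $x$ contribute $O(1)$. For the remaining $n$ we have $|x-n|\geq k$ for the $k$-th such integer on each side, so these contribute
\[
\ll\frac{x}{T}\sum_{1\leq k\leq x}\frac1k\ll\frac{x(\log x+1)}{T}.
\]
Combining this with the tail-term estimate gives $R(x,T)\ll1+x(|\log x|+1)/T$ uniformly in $x>0$ and $T\geq1$, as claimed.
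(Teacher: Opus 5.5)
Your route is the paper's: Perron's formula with $a_n=\lambda(n)$, then $\zeta(\sigma_0)\ll|\log x|+1$, $x^{\sigma_0}\ll x$, and a bound for the short sum. Your short-sum estimate is correct. You are right that the factor $4^{\sigma_0}$ in the quoted Lemma is the delicate point, but neither of your ways of handling it works.

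\emph{Small $x$.} For $x<1/2$ the short sum is empty. The Lemma's tail term is then at least $4^{\sigma_0}\zeta(\sigma_0)/T\ge 4|\log x|/T$, because $\zeta(\sigma_0)\ge 1/(\sigma_0-1)=|\log x|$. With $T=1$ and $x\to0^+$ this is unbounded, whereas $1+x(|\log x|+1)/T$ stays bounded. So your step $(1+x)(|\log x|+1)/T\ll 1+x(|\log x|+1)/T$ is false exactly when $x\ll1$. Uniformity as $x\to0^+$ is the whole purpose of this corollary: the later use of $R(x-y,T)$ with $x-y\to0^+$ needs it.

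\emph{Near $x=1$.} The horizontal segments joining $\text{Re}(s)=2$ to $\text{Re}(s)=\sigma_0(x)$ have length $\sigma_0(x)-2\asymp 1/|\log x|$, and $x^{\sigma}\asymp1$ on them. Your absolute-value bound is therefore $\asymp 1/(T|\log x|)$, which for fixed $T$ is unbounded as $x\to1$. Keeping $1/|s|$ instead of $1/T$ only improves this to $\asymp\log(1/|\log x|)$, still unbounded. The truncated integral really is $O(1)$ there, but only through cancellation that the trivial bound cannot see.

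The remedy is to bypass the packaged form of the Lemma and use the termwise estimate it is built from. Since the Dirichlet series converges absolutely on $\text{Re}(s)=\sigma_0>1$, you may integrate term by term and use, for $y=x/n\neq1$,
\[
\frac{1}{2\pi i}\int_{\sigma_0-iT}^{\sigma_0+iT}\frac{y^{s}}{s}\,ds=\mathbf{1}_{y>1}+O\Bigl(y^{\sigma_0}\min\Bigl(1,\frac{1}{T|\log y|}\Bigr)\Bigr).
\]
For $y=1$ the error is $O(\sigma_0/T)=O(1/T)$, since then $x\in\mathbb{N}$ and $\sigma_0\le2$.

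\emph{Far range.} The $n$ with $n\le x/2$ or $n\ge 2x$ have $|\log(x/n)|\ge\log 2$. They contribute $\ll x^{\sigma_0}\zeta(\sigma_0)/T\ll x(|\log x|+1)/T$, with no $4^{\sigma_0}$ at all.

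\emph{Short range.} For $x/2<n<2x$ one has $(x/n)^{\sigma_0}<2e$. If $x\ge2$ then $2^{\sigma_0}\le 2^{1+1/\log 2}=2e$; if $x<2$ the only possible $n<x$ is $n=1$, and $x^{\sigma_0}=ex$. Also $|\log(x/n)|\ge|x-n|/(2x)$, so your short-range computation goes through unchanged.

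This covers $x\to0^+$ and $x\to1$ simultaneously, with no contour shift. It is also what the paper's proof tacitly relies on. There the tail term is written directly as $\frac{x^{\sigma_0}}{T}\zeta(\sigma_0)$, omitting without comment the $4^{\sigma_0}$ of the quoted Lemma; that omission is legitimate only via this termwise bound.
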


\begin{proof}
The case $x\geq1$ can be found in Corollary $4.2$ of \cite{H}, so
we need to prove the result for $0<x<1$. Actually the proof is essentially
the same, we just need to change some passages. We take $\sigma_{0}>1$,
so by Perron's formula above we get
\[
L\left(x\right)=\frac{1}{2\pi i}\int_{\sigma_{0}-iT}^{\sigma_{0}+iT}\frac{\zeta\left(2s\right)}{\zeta\left(s\right)}\frac{x^{s}}{s}ds+E\left(x\right)+R\left(x,T\right).
\]
Now we have to estimate $R\left(x,T\right)$. Note that, in this case,
we trivially have
\[
\sum_{\underset{{\scriptstyle n\neq x}}{x/2<n<2x}}\min\left(1,\frac{x}{T\left|x-n\right|}\right)\leq\sum_{n\leq2}1\ll1
\]
and the second term is
\[
\frac{x^{\sigma_{0}}}{T}\sum_{n\geq1}\frac{1}{n^{\sigma_{0}}}=\frac{x^{\sigma_{0}}}{T}\zeta\left(\sigma_{0}\right)
\ll\frac{x^{\sigma_{0}}}{T\left(\sigma_{0}-1\right)}.
\]
If $x\geq1$ we have the same situation of \cite{H} and so, combining
all the results, we get
\[
R\left(x,T\right)\ll1+\frac{x \vert \log\left(x\right) \vert}{T}+\frac{x^{\sigma_{0}}}{T\left(\sigma_{0}-1\right)}.
\]
Taking $\sigma_{0}$ as in (\ref{eq:def sigma0}) we have the thesis.
\end{proof}
It is quite simple to observe that, if we replace $\zeta\left(2s\right)$
in the integral with the constant function equal to one, that is,
we work with the M\"obius function instead of the Liouville function,
we get the same results. So we have
\begin{cor}
For $x>0$, $T\geq1$ and 
\begin{equation}
\sigma_{0}=\sigma_{0}\left(x\right):=\begin{cases}
1+1/\left|\log\left(x\right)\right|, & x>0,\,x\neq1\\
2, & x=1
\end{cases}\label{eq:def sigma0-1}
\end{equation}
 we have that 
\[
M\left(x\right)=\frac{1}{2\pi i}\int_{\sigma_{0}-iT}^{\sigma_{0}+iT}\frac{1}{\zeta\left(s\right)}\frac{x^{s}}{s}ds+E^{*}\left(x\right)+R^{*}\left(x,T\right)
\]
where 
\[
E^{*}(x):=\begin{cases}
\mu\left(x\right)/2, & x\in\mathbb{N}\\
0, & \text{otherwise}
\end{cases}
\]
and
\[
R^{*}\left(x,T\right)\ll1+\frac{x\left(\left|\log\left(x\right)\right|+1\right)}{T}.
\]
\end{cor}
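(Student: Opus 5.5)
The plan is to repeat the proof of the previous corollary word for word, with the Dirichlet series $\sum_{n\geq1}\lambda(n)n^{-s}=\zeta(2s)/\zeta(s)$ replaced by $\sum_{n\geq1}\mu(n)n^{-s}=1/\zeta(s)$. The only input is Perron's formula (the lemma above), applied with $a_n=\mu(n)$. This series converges absolutely for $\text{Re}(s)>1$, so $\sigma_a=1$. Choosing $\sigma_0>1$, Perron's formula gives, for every $x>0$ and $T\geq1$,
\[
M(x)=\frac{1}{2\pi i}\int_{\sigma_{0}-iT}^{\sigma_{0}+iT}\frac{1}{\zeta(s)}\frac{x^{s}}{s}ds+E^{*}(x)+R^{*}(x,T).
\]
The term $E^{*}(x)$ equals $\mu(x)/2$ when $x\in\mathbb{N}$ and $0$ otherwise, as in the lemma. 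What remains is to bound $R^{*}(x,T)$. We use only $|\mu(n)|\leq1$, which is exactly the property of $\lambda$ used before, so each bound carries over unchanged.

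\emph{Case $x\geq1$.} The estimate is the classical one; see the argument of Corollary 4.2 of \cite{H}, which uses only $|a_n|\leq1$. The sum over $x/2<n<2x$ of $\min\left(1,x/(T|x-n|)\right)$ is $\ll1+x\log(2x)/T$, by the usual splitting according to the size of $|x-n|$. The tail term is $\frac{4^{\sigma_0}+x^{\sigma_0}}{T}\sum_{n}n^{-\sigma_0}\ll\frac{x^{\sigma_0}}{T(\sigma_0-1)}$, using $\zeta(\sigma_0)\ll(\sigma_0-1)^{-1}$.

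\emph{Case $0<x<1$.} The range $x/2<n<2x$ contains at most the integer $1$, so the first sum is $\ll1$. The tail term is again $\ll\frac{x^{\sigma_0}}{T(\sigma_0-1)}$; the constant $4^{\sigma_0}$ is absorbed because $\sigma_0$ stays bounded once $x$ is kept away from $1$. For $x$ near $1$ this needs a remark, which is handled in the next step.

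\emph{Choice of $\sigma_0$.} Take $\sigma_0$ as in \eqref{eq:def sigma0-1}. Then $(\sigma_0-1)^{-1}=|\log x|$, and $x^{\sigma_0}=x\cdot x^{1/|\log x|}=e^{\pm1}x\ll x$. Hence $\frac{x^{\sigma_0}}{T(\sigma_0-1)}\ll\frac{x|\log x|}{T}$. When $x=1$ we have $\sigma_0=2$ and this term is $\ll1/T\leq1$. Collecting the bounds gives $R^{*}(x,T)\ll1+x(|\log x|+1)/T$.

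The only delicate point is uniformity near $x=1$, where $\sigma_0$ blows up. There, however, $x^{\sigma_0}\asymp x$ while $4^{\sigma_0}/(T(\sigma_0-1))$ could be large. To avoid this, I would first reduce the constant in Perron's tail term: either observe that the bound in \cite{MV} actually holds with $4^{\sigma_0}$ replaced by a quantity $\ll x^{\sigma_0}$ in the relevant range, or, if that is not available, cap $\sigma_0$ at $2$. Capping is harmless because $\sigma_0\leq2$ whenever $|\log x|\geq1$. If $|\log x|<1$, then $x\asymp1$ and $\sigma_0=2$ gives the acceptable bound $\ll1/T$. Beyond this bookkeeping, the proof is identical to that of the preceding corollary.
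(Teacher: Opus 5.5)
Your route is the paper's: Perron's formula with $a_n=\mu(n)$, using only $|\mu(n)|\le1$, the same split $x\ge1$ / $0<x<1$, and the same $\sigma_0(x)$. The paper simply says the proof for $L$ carries over.

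You are right that the factor $4^{\sigma_0}$ in the quoted Perron lemma is a problem; the paper's proof for $L$ silently drops it. However, your bookkeeping does not repair it, and two steps do not go through.

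\textbf{Small $x$.} For $0<x<1$, the claim that $4^{\sigma_0}$ ``is absorbed because $\sigma_0$ stays bounded'' is false as $x\to0^+$. For $0<x\le1/2$ the range $x/2<n<2x$ is empty, $\sigma_0\to1^+$, and $\sum_n|\mu(n)|n^{-\sigma_0}\asymp(\sigma_0-1)^{-1}=|\log x|$. So the lemma's tail term is $\asymp|\log x|/T$. A bounded $4^{\sigma_0}$ is not $\ll x^{\sigma_0}$ for small $x$. For $x<e^{-T}$ (e.g.\ $T=1$, $x\to0^+$) this is not $O\bigl(1+x(|\log x|+1)/T\bigr)$.

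\textbf{Near $x=1$.} Capping $\sigma_0$ at $2$ changes the statement. For $e^{-1}<x<e$, $x\ne1$, the corollary prescribes the line $\text{Re}\,s=\sigma_0(x)>2$. Passing back from $\text{Re}\,s=2$ costs the two horizontal segments joining $2\pm iT$ to $\sigma_0\pm iT$. Their trivial bound $T^{-1}\int_2^{\sigma_0}x^{\sigma}\,d\sigma\asymp1/(T|\log x|)$ is unbounded as $x\to1$. Your other option, a better constant in the Montgomery--Vaughan bound, is only asserted.

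\textbf{The fix.} Use the effective Perron formula in the form with remainder $\ll\sum_{n}|a_n|(x/n)^{\sigma_0}\bigl(1+T|\log(x/n)|\bigr)^{-1}$ (as in Tenenbaum's book), which contains no $4^{\sigma_0}$. Keep $\sigma_0=\sigma_0(x)$ exactly as prescribed, so that $x^{\sigma_0}\le ex$.
The terms with $x/2<n<2x$ have $(x/n)^{\sigma_0}\ll1$. They contribute $\ll\sum\min\bigl(1,x/(T|x-n|)\bigr)\ll1+x\log(2x)/T$, and when $x<1$ there is at most the single term $n=1$.
The remaining $n$ have $|\log(x/n)|\ge\log2$ and contribute $\ll x^{\sigma_0}\zeta(\sigma_0)/T\ll x(1+|\log x|)/T$. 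Here use $\zeta(\sigma)\ll1+(\sigma-1)^{-1}$. The bound $\zeta(\sigma_0)\ll(\sigma_0-1)^{-1}$ that you and the paper use fails as $\sigma_0\to\infty$, though harmlessly.
This yields $R^{*}(x,T)\ll1+x(|\log x|+1)/T$ uniformly for $x>0$ and $T\ge1$.
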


For the next results we need the following assumption that can be
found in \cite{H}, but actually present in similar forms in many
other papers (see, e.g., \cite{G}).
\begin{conjecture}
\label{conj:simple zeros}(SZ) We have that
\[
\sum_{0<\gamma<T}\frac{1}{\left|\zeta^{\prime}\left(\rho\right)\right|}\ll T(\log T)^{1/2}.
\]
\end{conjecture}

It is well-known that Conjecture \ref{conj:simple zeros} implies
that all non-trivial zeros are simple and also the bounds
\begin{align*}
  \sum_{0<\gamma<T}\frac{\left|\zeta\left(2\rho\right)\right|}{\left|\rho\zeta^{\prime}\left(\rho\right)\right|}
  &\ll
  (\log T)^{3/2}\log\left(\log\left(T\right)\right) \\
  \sum_{0<\gamma<T}\frac{1}{\left|\rho\zeta^{\prime}\left(\rho\right)\right|}
  &\ll
  (\log T)^{3/2}
\end{align*}
(see, e.g., \cite{H}, \cite{M-1}, \cite{N}). We recall the following
estimations.
\begin{lem}
Assume RH and $\left|t\right|\geq1$. Then for all $\varepsilon>0$
we have
\begin{equation}
\zeta\left(\sigma+it\right)\ll\begin{cases}
t^{1/2-\sigma+\varepsilon}, & 0<\sigma<1/2\\
t^{\varepsilon}, & \sigma\geq1/2,
\end{cases}\label{eq:first est}
\end{equation}
where the implicit constant depends only on $\varepsilon$ and for
fixed $\delta>0$ we have
\begin{equation}
\frac{1}{\zeta\left(\sigma+it\right)}\ll\begin{cases}
t^{-1/2+\sigma+\varepsilon}, & 0<\sigma\leq1/2-\delta\\
t^{\varepsilon}, & \sigma\geq1/2+\delta,
\end{cases}\label{eq:second est}
\end{equation}
where the implicit constant depends only on $\varepsilon$ and $\delta$.
\end{lem}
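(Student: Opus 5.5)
The first estimate \eqref{eq:first est} follows from the Lindel\"of bound, which is a consequence of RH, combined with convexity and the functional equation. The second estimate \eqref{eq:second est} for $1/\zeta$ comes from the classical RH bound on the right of the critical line, transported to the left by the functional equation.

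\textbf{Bounds for $\zeta$.} Under RH we have the Lindel\"of bound $\zeta(1/2+it)\ll_\varepsilon |t|^{\varepsilon}$. On $\sigma=1+\varepsilon$ trivially $\zeta\ll 1$. By the Phragm\'en--Lindel\"of principle in the strip $1/2\le\sigma\le 1+\varepsilon$ we get $\zeta(\sigma+it)\ll |t|^{\varepsilon}$ for $\sigma\ge 1/2$; for $\sigma>1+\varepsilon$ this is trivial. For $0<\sigma<1/2$ I would use the functional equation $\zeta(s)=\chi(s)\zeta(1-s)$, where Stirling gives $|\chi(\sigma+it)|\asymp |t|^{1/2-\sigma}$ uniformly for $\sigma$ in compact sets. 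Since $1-\sigma>1/2$, the previous case gives $\zeta(1-s)\ll|t|^{\varepsilon}$, and hence $\zeta(\sigma+it)\ll|t|^{1/2-\sigma+\varepsilon}$.

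\textbf{Bounds for $1/\zeta$ on the right.} Under RH, the standard result (Titchmarsh, Theorem 14.2) is that $\log\zeta(\sigma+it)\ll (\log|t|)^{2-2\sigma+\varepsilon}/\log\log|t|$ uniformly for $1/2+\delta\le\sigma\le 1-\delta'$. In particular $|\log\zeta|=o(\log|t|)$ there, which gives $1/\zeta(\sigma+it)\ll_{\varepsilon,\delta}|t|^{\varepsilon}$. To handle $\sigma$ near $1$ uniformly, I would instead apply Borel--Carath\'eodory and Hadamard three circles to $\log\zeta$, which is analytic in $\sigma>1/2$ under RH. The input is the upper bound $\mathrm{Re}\log\zeta=\log|\zeta|\le\varepsilon\log|t|+O(1)$ from the first part, together with the trivial boundedness of $\log\zeta$ on $\sigma\ge 2$. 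For $\sigma\ge 2$ the bound $1/\zeta\ll 1$ is trivial, and the range $1-\delta'\le\sigma\le 2$ is also covered by the Titchmarsh argument, which gives $\log\zeta\ll(\log|t|)^{2-2\sigma+\varepsilon}$ throughout $\sigma\ge1/2+\delta$.

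\textbf{Bounds for $1/\zeta$ on the left.} For $0<\sigma\le 1/2-\delta$, write $1/\zeta(s)=1/\bigl(\chi(s)\zeta(1-s)\bigr)$. Here $1-\sigma\ge 1/2+\delta$, so $1/\zeta(1-s)\ll|t|^{\varepsilon}$ by the previous step. Also $1/|\chi(s)|\asymp|t|^{\sigma-1/2}$ by Stirling. Together these give $1/\zeta(\sigma+it)\ll|t|^{-1/2+\sigma+\varepsilon}$. The implicit constants depend only on $\varepsilon$ and $\delta$, because every Stirling estimate is uniform on the compact range $0<\sigma\le1$ for $|t|\ge1$.

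\textbf{Main obstacle.} The only nontrivial input is the RH bound $1/\zeta\ll|t|^{\varepsilon}$ for $\sigma\ge1/2+\delta$. I would quote it from Titchmarsh (Chapter 14, the Borel--Carath\'eodory argument applied to $\log\zeta$) rather than reprove it, and simply check that the constants are uniform in $\sigma\ge1/2+\delta$.
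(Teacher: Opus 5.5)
Your argument is correct and is essentially what the paper relies on. The paper gives no proof of its own: it cites Montgomery--Vaughan, Corollary~10.5 (the size $|\chi(\sigma+it)|\asymp|t|^{1/2-\sigma}$) and Theorems~13.18 and 13.23 (the RH bounds for $\zeta$ and $1/\zeta$ to the right of the critical line), which are exactly the ingredients you combine through the functional equation.

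One small slip does not affect the conclusion. The bound $\log\zeta(\sigma+it)\ll(\log|t|)^{2-2\sigma+\varepsilon}$ is only valid for $\sigma\le 1$: for $\sigma$ well beyond $1$ the right-hand side tends to $0$, while $\log\zeta(\sigma+it)$ does not. On $\sigma\ge1$, however, the classical bound $1/\zeta(\sigma+it)\ll\log|t|$, or the same three-circles argument, already gives $1/\zeta\ll|t|^{\varepsilon}$ uniformly, so nothing is lost.
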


See \cite{MV}, Corollary 10.5, Theorem 13.18, Theorem 13.23 or \cite{H}.
\begin{lem}
Assume RH and the simplicity of the non-trivial zeros of $\zeta\left(s\right)$.
%Let $z=x+iy$, with $x>0$ and $y\in\mathbb{R}$. 
Then, there exists a sequence $\mathcal{T}=\left(T_{\nu}\right)_{\nu\in\mathbb{N}}$
with $\nu\leq T_{\nu}\leq\nu+1$ such that for all $\varepsilon>0$
and $0<\sigma\leq2$ we have.
\begin{equation}
\frac{1}{\zeta\left(\sigma+iT_{\nu}\right)}\ll_{\varepsilon}T_{\nu}^{\varepsilon}.\label{eq:special est}
\end{equation}
\end{lem}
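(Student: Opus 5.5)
The plan is to prove the bound first in the range $1/2\leq\sigma\leq2$ by choosing $T_{\nu}$ well separated from the ordinates of the zeros, and then to transfer it to $0<\sigma<1/2$ with the functional equation. Only RH is really needed; simplicity of the zeros plays no role.

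\textbf{Step 1: a local lower bound for $\log|\zeta|$.} Following Titchmarsh (Theorem 14.15 and the argument of Theorem 14.16), RH gives, uniformly for $1/2\leq\sigma\leq2$ and $t$ large,
\[
\log\left|\zeta\left(\sigma+it\right)\right|\geq\sum_{\left|t-\gamma\right|<1/\log\log t}\log\left|t-\gamma\right|-C\frac{\log t}{\log\log t}.
\]
This comes from writing $\log\zeta(s)$ through the Hadamard product, or Borel--Carath\'eodory applied in discs around $2+it$. Under RH, $|s-\rho|\geq|t-\gamma|$ for $\sigma\geq1/2$, so each nearby zero contributes at least $\log|t-\gamma|$. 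The remaining zeros and the archimedean terms contribute $O(\log t/\log\log t)$, by the RH bound for $\log\zeta$ on $\sigma\geq1/2+1/\log\log t$.

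\textbf{Step 2: choice of $T_{\nu}$ (the main step).} Let $\gamma$ range over the ordinates with $|\gamma-\nu|\leq2$; there are $O(\log\nu)$ of them. For each such $\gamma$,
\[
\int_{\nu}^{\nu+1}\min\bigl(0,\log\left|T-\gamma\right|\bigr)\,dT\geq\int_{-1}^{1}\log\left|u\right|\,du=-2.
\]
Summing over these $\gamma$, there is $T_{\nu}\in[\nu,\nu+1]$ with
\[
\sum_{\left|T_{\nu}-\gamma\right|<1}\log\left|T_{\nu}-\gamma\right|\geq-C\log\nu .
\]
This is not yet enough, since we need $-o(\log\nu)$. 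Hence one must restrict, as in Step 1, to $|t-\gamma|<1/\log\log t$. The relevant zeros then lie in a window in which their number is controlled by $N(t+h)-N(t-h)\ll h\log t+\log t/\log\log t$ under RH. Averaging only over those zeros that can fall within $1/\log\log\nu$ of the interval, and using $\int_{|u|<\eta}\log|u|\,du\ll\eta\log(1/\eta)$, gives a $T_{\nu}$ for which the sum in Step 1 is $\geq-C'\log\nu\,\log\log\log\nu/\log\log\nu=o(\log\nu)$. Consequently $1/|\zeta(\sigma+iT_{\nu})|\ll_{\varepsilon}T_{\nu}^{\varepsilon}$ uniformly for $1/2\leq\sigma\leq2$. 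I expect this bookkeeping of the averaging, so that the loss is genuinely $o(\log T)$, to be the main obstacle. If it becomes delicate, I would simply quote Titchmarsh, Theorem 14.16, which asserts exactly this for $1/2\leq\sigma\leq2$.

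\textbf{Step 3: the range $0<\sigma<1/2$.} Use $\zeta(s)=\chi(s)\zeta(1-s)$ with $|\chi(\sigma+it)|\asymp|t|^{1/2-\sigma}$ for $\sigma$ in compact sets and $|t|\geq1$. Then
\[
\frac{1}{\left|\zeta\left(\sigma+iT_{\nu}\right)\right|}=\frac{1}{\left|\chi\left(\sigma+iT_{\nu}\right)\right|\left|\zeta\left(1-\sigma+iT_{\nu}\right)\right|}\ll T_{\nu}^{\sigma-1/2}\,T_{\nu}^{\varepsilon}\leq T_{\nu}^{\varepsilon}.
\]
Here we used $\overline{\zeta(\bar s)}=\zeta(s)$ and the fact that $1-\sigma\in(1/2,1)$, so Step 2 applies at the same height $T_{\nu}$. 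This gives \eqref{eq:special est} for all $0<\sigma\leq2$, with the implied constant depending only on $\varepsilon$.
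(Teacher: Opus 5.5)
Your proposal is correct and follows the standard argument (Titchmarsh \S14.16, the same averaging-over-nearby-zeros proof behind Montgomery--Vaughan Theorem 13.23). The paper gives no proof of its own and simply cites that theorem or Humphries, so your reconstruction, with the functional-equation step handling $0<\sigma<1/2$ at the same height $T_\nu$, is essentially the paper's route made explicit. You are also right that only RH is needed, not the simplicity of the zeros; for the finitely many small $\nu$, any $T_\nu$ that avoids the ordinates $\gamma$ works.
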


See \cite{MV}, Theorem 13.23 or again \cite{H}. Now, introducing the notation $\int_{\left(a\right)}:=\int_{a-i\infty}^{a+i\infty}$, we are able to
prove the following results.
\begin{thm}
Assume RH and SZ. Then for $T_{\nu}\in\mathcal{T}$ and $x>0$ we
get
\[
L(x)=\frac{x^{1/2}}{\zeta\left(\frac{1}{2}\right)}+\sum_{\left|\gamma\right|<T_{\nu}}\frac{\zeta\left(2\rho\right)x^{\rho}}{\zeta^{\prime}\left(\rho\right)\rho}+E(x)+I(x)+R\left(x,T_\nu\right)
\]
where
\[
E(x):=\begin{cases}
\lambda\left(x\right)/2, & x\in\mathbb{N}\\
0, & \text{otherwise,}
\end{cases}
\]

\[
I\left(x\right):=\frac{1}{2\pi i}\int_{\left(\varepsilon\right)}\frac{\zeta\left(2s\right)}{\zeta\left(s\right)}\frac{x^{s}}{s}ds
\]
and
\[
R\left(x,T_\nu\right)\ll_{\varepsilon}1+\frac{x\left(\left|\log\left(x\right)\right|+1\right)}{T_{\nu}}+\frac{x-x^{1/4}}{T_{\nu}^{1-\varepsilon}\log\left(x\right)}
\]
for arbitrarily small $0<\varepsilon<1/4$, where the indeterminate form at $x=1$ must be interpreted as a limit.
\end{thm}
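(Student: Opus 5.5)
We start from the Perron-type formula for $L(x)$ in the Corollary above, valid for every $x>0$. It gives
\[
L(x)=\frac{1}{2\pi i}\int_{\sigma_{0}-iT_\nu}^{\sigma_{0}+iT_\nu}\frac{\zeta(2s)}{\zeta(s)}\frac{x^{s}}{s}ds+E(x)+O\Bigl(1+\frac{x(|\log x|+1)}{T_\nu}\Bigr),
\]
with $T=T_\nu\in\mathcal{T}$. We then shift the segment of integration to the line $\text{Re}(s)=\varepsilon$, with $0<\varepsilon<1/4$, by integrating over the rectangle with vertices $\sigma_0\pm iT_\nu$ and $\varepsilon\pm iT_\nu$.

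Inside this rectangle the integrand $\zeta(2s)x^s/(s\zeta(s))$ has the following singularities:
\begin{itemize}
\item[(i)] a simple pole at $s=1/2$ coming from $\zeta(2s)$, with residue $\tfrac12\cdot x^{1/2}/(\tfrac12\zeta(1/2))=x^{1/2}/\zeta(1/2)$;
\item[(ii)] simple poles at the non-trivial zeros $\rho$ with $|\gamma|<T_\nu$. These lie on $\text{Re}(s)=1/2$ by RH and are simple by SZ, with residues $\zeta(2\rho)x^\rho/(\rho\zeta'(\rho))$.
\end{itemize}
There is no pole at $s=0$ because $\text{Re}(s)\ge\varepsilon>0$, and $\zeta(s)\neq0$ for $0<\sigma<1/2$ under RH. The residue theorem then produces the main terms of the statement. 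We write the integral over the truncated line $[\varepsilon-iT_\nu,\varepsilon+iT_\nu]$ as $I(x)$ minus the two tails $|t|>T_\nu$. By \eqref{eq:first est} and \eqref{eq:second est}, for $\sigma=\varepsilon$ we have $\zeta(2\varepsilon+2it)\ll t^{1/2-2\varepsilon+\varepsilon}$ and $1/\zeta(\varepsilon+it)\ll t^{-1/2+2\varepsilon}$. The integrand is therefore $\ll x^{\varepsilon}t^{-1+O(\varepsilon)}$. This does not give absolute convergence of the tails, so $I(x)$ has to be understood as a conditionally convergent, symmetric-limit integral. We would justify that limit by the same horizontal-segment estimate below, letting $T_\nu\to\infty$ along $\mathcal{T}$.

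The main obstacle is bounding the horizontal segments $\sigma\in[\varepsilon,\sigma_0]$, $t=\pm T_\nu$, so as to obtain the error $\frac{x-x^{1/4}}{T_\nu^{1-\varepsilon}\log x}$. We would use \eqref{eq:special est} for $1/\zeta(\sigma+iT_\nu)\ll T_\nu^{\varepsilon}$, uniformly for $0<\sigma\le2$, together with $\zeta(2\sigma+2iT_\nu)\ll T_\nu^{\varepsilon}$ for $\sigma\ge1/4$ from \eqref{eq:first est}. On the part $\sigma\in[1/4,\sigma_0]$ the integral is then
\[
\ll T_\nu^{-1+2\varepsilon}\int_{1/4}^{\sigma_0}x^\sigma d\sigma\ll T_\nu^{-1+2\varepsilon}\frac{x^{\sigma_0}-x^{1/4}}{\log x}.
\]
Since $x^{\sigma_0}=x\cdot x^{1/|\log x|}\le e\,x$ for $x>1$ (and similarly for $x<1$), this has the required shape after renaming $\varepsilon$. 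On the part $\sigma\in[\varepsilon,1/4]$, \eqref{eq:first est} gives $\zeta(2\sigma+2iT_\nu)\ll T_\nu^{1/2-2\sigma+\varepsilon}$, so the integrand is at most $T_\nu^{-1/2+O(\varepsilon)}x^\sigma$. We would handle this piece either by absorbing it into the same expression, treating small $x$ separately where it is $O(1)$, or by choosing $T_\nu\ge x$. This bookkeeping, in particular keeping the form $(x-x^{1/4})/\log x$, which is positive and tends to $3/4$ as $x\to1$ and so is interpreted as a limit there, is where we expect care to be needed. The case $x<1$ follows by the same computation, since $\sigma_0$ is then defined with $|\log x|$.
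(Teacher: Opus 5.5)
Your contour set-up matches the paper's: Perron at $\sigma_0$, the rectangle down to $\text{Re}(s)=\varepsilon$, the residues $x^{1/2}/\zeta(\frac12)$ and $\zeta(2\rho)x^{\rho}/(\rho\zeta'(\rho))$, and the horizontal sides via (\ref{eq:special est}). But your treatment of the vertical line $\text{Re}(s)=\varepsilon$ has a genuine gap. You get only $\ll x^{\varepsilon}t^{-1+\varepsilon}$ there because you used the same $\varepsilon$ both as the abscissa and as the loss in (\ref{eq:first est}) and (\ref{eq:second est}). Those bounds hold with an arbitrary loss $\varepsilon'>0$, independent of the abscissa. The main exponents, $\frac12-2\varepsilon$ for $\zeta(2\varepsilon+2it)$ and $-\frac12+\varepsilon$ for $1/\zeta(\varepsilon+it)$, add up to $-\varepsilon$. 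Taking $\varepsilon'=\varepsilon/3$, as the paper does, gives
\[
\frac{\zeta(2s)}{\zeta(s)}\frac{x^{s}}{s}\ll_{\varepsilon}x^{\varepsilon}|t|^{-1-\varepsilon/3},\qquad s=\varepsilon+it,\quad |t|\geq1.
\]
So $I(x)$ converges absolutely, and each tail $|t|>T_\nu$ contributes $\ll_{\varepsilon}x^{\varepsilon}T_{\nu}^{-\varepsilon/3}$, which then goes into $R(x,T_\nu)$. The paper writes this as $\ll1$, which is not uniform in $x$. To absorb it into the stated error for every $x>0$, take $\varepsilon'<\varepsilon^{2}/2$. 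The tail bound $x^{\varepsilon}T_\nu^{-\varepsilon+2\varepsilon'}$ is then $\leq1$ for $x\leq T_\nu^{1-2\varepsilon'/\varepsilon}$, and for larger $x$ it is absorbed by the remaining terms of the stated error.

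Your substitute, reading $I(x)$ as a symmetric limit along $\mathcal{T}$ and justifying it by letting $T_\nu\to\infty$, does not repair this; it is circular. In the contour identity the horizontal sides tend to $0$ and the Perron integral tends to $L(x)-E(x)$. So the limit, as $\nu\to\infty$, of the integral over $[\varepsilon-iT_\nu,\varepsilon+iT_\nu]$ exists exactly when $\sum_{|\gamma|<T_\nu}\zeta(2\rho)x^{\rho}/(\rho\zeta'(\rho))$ converges along $\mathcal{T}$. You have no independent access to that: SZ only gives $\sum_{0<\gamma<T}|\zeta(2\rho)/(\rho\zeta'(\rho))|\ll(\log T)^{3/2}\log\log T$, which does not imply convergence.

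More importantly, the statement is quantitative for each fixed $T_\nu$. The difference between $I(x)$ and the truncated vertical integral is part of $R(x,T_\nu)$ and has to be bounded, and your proposal never bounds it.

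A smaller point on the horizontal sides: keep the factor $T_\nu^{-2\sigma}$ that (\ref{eq:first est}) gives for $\sigma\in[\varepsilon,1/4]$. That piece is then $\ll T_\nu^{-1/2+\varepsilon}\int_{\varepsilon}^{1/4}(x/T_\nu^{2})^{\sigma}d\sigma\ll1+x/T_\nu$ for all $x>0$. No restriction like $T_\nu\geq x$ is needed, and the statement does not permit one.
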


\begin{proof}
The proof is again similar to the one present in Theorem $4.5$ of
\cite{H} but again we need to evaluate also the case $0<x<1$. We
have the following decomposition, for $T \geq 1$ and with $T$ as an element of $\mathcal{T},$ that is, $T=T_{\nu}$ for some $\nu\in\mathbb{N}.$
\[
\frac{1}{2\pi i}\int_{\sigma_{0}-iT}^{\sigma_{0}+iT}\frac{\zeta\left(2s\right)}{\zeta\left(s\right)}\frac{x^{s}}{s}ds=\frac{1}{2\pi i}\oint_{C_{T}}\frac{\zeta\left(2s\right)}{\zeta\left(s\right)}\frac{x^{s}}{s}ds
\]
\[
-\frac{1}{2\pi i}\int_{\varepsilon-iT}^{\varepsilon+iT}\frac{\zeta\left(2s\right)}{\zeta\left(s\right)}\frac{x^{s}}{s}ds-\frac{1}{2\pi i}\left(\int_{\sigma_{0}+iT}^{\varepsilon+iT}+\int_{\varepsilon-iT}^{\sigma_{0}-iT}\right)\frac{\zeta\left(2s\right)}{\zeta\left(s\right)}\frac{x^{s}}{s}ds
\]
where $C_{T}$ is the rectangle with vertices in $\sigma_{0}\pm iT,\varepsilon\pm iT$.
Clearly
\[
\frac{1}{2\pi i}\oint_{C_{T}}\frac{\zeta\left(2s\right)}{\zeta\left(s\right)}\frac{x^{s}}{s}ds=\frac{x^{1/2}}{\zeta\left(\frac{1}{2}\right)}+\sum_{\left|\gamma\right|<T}\frac{\zeta\left(2\rho\right)x^{\rho}}{\zeta^{\prime}\left(\rho\right)\rho}
\]
from the residues at $s=1/2$ and $s=\rho$. Now we observe that
\[
-\frac{1}{2\pi i}\int_{\varepsilon-iT}^{\varepsilon+iT}\frac{\zeta\left(2s\right)}{\zeta\left(s\right)}\frac{x^{s}}{s}ds=I(x)-\frac{1}{\pi i}\text{Re}\left(\int_{\varepsilon+iT}^{\varepsilon+i\infty}\frac{\zeta\left(2s\right)}{\zeta\left(s\right)}\frac{x^{s}}{s}ds\right)
\]
and
\[
-\frac{1}{2\pi i}\left(\int_{\sigma_{0}+iT}^{\varepsilon+iT}+\int_{\varepsilon-iT}^{\sigma_{0}-iT}\right)\frac{\zeta\left(2s\right)}{\zeta\left(s\right)}\frac{x^{s}}{s}ds=\frac{1}{\pi}\text{Im}\left(\int_{\varepsilon+iT}^{\sigma_{0}+iT}\frac{\zeta\left(2s\right)}{\zeta\left(s\right)}\frac{x^{s}}{s}ds\right).
\]
Then, using the estimations (\ref{eq:first est})
and (\ref{eq:second est}) with $\text{\ensuremath{\varepsilon/3}}$
and setting $s=\sigma+it$, we obtain
\[
\frac{1}{\pi i}\text{Re}\left(\int_{\varepsilon+iT}^{\varepsilon+i\infty}\frac{\zeta\left(2s\right)}{\zeta\left(s\right)}\frac{x^{s}}{s}ds\right)\ll x^{\varepsilon}\int_{T}^{+\infty}t^{-1-\varepsilon/3}dt=\frac{x^{\varepsilon}}{T^{\varepsilon/3}}\ll1.
\]
In a similar manner, using (\ref{eq:special est}) with $\varepsilon/2$,
\[
\frac{1}{\pi}\text{Im}\left(\int_{\varepsilon+iT}^{\sigma_{0}+iT}\frac{\zeta\left(2s\right)}{\zeta\left(s\right)}\frac{x^{s}}{s}ds\right)\ll T^{-1/2+\varepsilon}\int_{\varepsilon}^{1/4}T^{-2\sigma}x^{\sigma}d\sigma+T^{-1+\varepsilon}\int_{1/4}^{\sigma_{0}}x^{\sigma}d\sigma
\]
\[
\ll\frac{x^{\varepsilon}+1}{T^{1/2}}+\frac{x^{\sigma_{0}}-x^{1/4}}{T^{1-\varepsilon}\log\left(x\right)}
\]
and this completes the proof. 
\end{proof}
If we replace the function $\zeta\left(2s\right)$ with the constant
function $1$ then we have a similar result. Indeed, we have
\begin{thm}
Assume RH and SZ. Then for $T_{\nu}\in\mathcal{T}$ and $x>0$ we
get
\[
M(x)=\sum_{\left|\gamma\right|<T_{\nu}}\frac{x^{\rho}}{\zeta^{\prime}\left(\rho\right)\rho}+E^{*}(x)+I^{*}(x)+R^{*}\left(x,T_\nu\right)
\]
where
\[
E^{*}(x):=\begin{cases}
\mu\left(x\right)/2, & x\in\mathbb{N}\\
0, & \text{otherwise,}
\end{cases}
\]

\[
I^{*}\left(x\right):=\frac{1}{2\pi i}\int_{\left(\varepsilon\right)}\frac{1}{\zeta\left(s\right)}\frac{x^{s}}{s}ds
\]
and
\[
R^{*}\left(x,T_\nu\right)\ll_{\varepsilon}1+\frac{x\left(\left|\log\left(x\right)\right|+1\right)}{T_{\nu}}+\frac{x^{\varepsilon}}{T_{\nu}^{1/2-2\varepsilon}}+\frac{x-x^{\varepsilon}}{T_{\nu}^{1-\varepsilon}\log\left(x\right)}
\]
for arbitrarily small $0<\varepsilon<1/2$, where the indeterminate form at $x=1$ must be interpreted as limit.
\end{thm}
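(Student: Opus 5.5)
The plan is to copy the proof of the explicit formula for $L(x)$ just given, with $\zeta(2s)$ replaced by the constant $1$, and to keep track of how the error terms change. The starting point is the Perron-type corollary for $M(x)$, valid for all $x>0$ and $T\geq1$ with $\sigma_0$ as in (\ref{eq:def sigma0-1}). It already gives $E^{*}(x)$ and a remainder $\ll 1+x(|\log x|+1)/T$.

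Take $T=T_\nu\in\mathcal{T}$ and move the line of integration from $\sigma_0$ to $\varepsilon$ using the rectangle $C_T$ with vertices $\sigma_0\pm iT$, $\varepsilon\pm iT$. Inside $C_T$ the integrand $x^{s}/(s\zeta(s))$ has simple poles only at the non-trivial zeros $\rho$ with $|\gamma|<T$; simplicity follows from SZ. The residues are $x^{\rho}/(\rho\zeta'(\rho))$. There is no pole at $s=1/2$ now, because $\zeta(2s)$ is gone, and $s=0$ lies outside the rectangle. As before, the segment from $\varepsilon-iT$ to $\varepsilon+iT$ is written as $-I^{*}(x)$ plus tails from $\varepsilon\pm iT$ to $\varepsilon\pm i\infty$. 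The horizontal segments are combined into $\frac1\pi\mathrm{Im}\int_{\varepsilon+iT}^{\sigma_0+iT}$.

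For the tails, I use (\ref{eq:second est}) with $\sigma=\varepsilon\leq 1/2-\delta$. This gives $1/\zeta(\varepsilon+it)\ll t^{-1/2+\varepsilon+\varepsilon'}$, so the tail is
\[
\ll x^{\varepsilon}\int_T^{\infty}t^{-3/2+\varepsilon+\varepsilon'}\,dt\ll \frac{x^{\varepsilon}}{T^{1/2-2\varepsilon}}
\]
after choosing $\varepsilon'\leq\varepsilon$. This produces the third error term. The convergence of $I^{*}(x)$ itself follows from the same bound.

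For the horizontal segment I use (\ref{eq:special est}), which holds uniformly for $0<\sigma\leq2$ at height $T_\nu$. It gives
\[
\int_{\varepsilon}^{\sigma_0}\frac{x^{\sigma}}{|\sigma+iT|\,|\zeta(\sigma+iT)|}\,d\sigma\ll T^{-1+\varepsilon}\int_\varepsilon^{\sigma_0}x^{\sigma}\,d\sigma=T^{-1+\varepsilon}\,\frac{x^{\sigma_0}-x^{\varepsilon}}{\log x}.
\]
Since $x^{\sigma_0}=e\,x$ when $x>1$, this is $\ll (x-x^{\varepsilon})/(T^{1-\varepsilon}\log x)$ up to a harmless $O(T^{-1+\varepsilon})$ correction. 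For $0<x<1$, the bound $x^{\sigma_0}\leq x$ gives the same shape. The case $x=1$ is read as a limit.

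The only delicate point is $0<x<1$, where $\log x<0$. There one has to check that $(x^{\sigma_0}-x^{\varepsilon})/\log x$ is positive and bounded by a constant times $(x-x^{\varepsilon})/\log x$; it is, since both are of size $\asymp x^{\varepsilon}/|\log x|$ times a bounded factor. Collecting the pieces gives $R^{*}(x,T_\nu)$ as stated.
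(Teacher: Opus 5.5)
Your plan is the paper's proof: Perron for $M$ with the same $\sigma_0$, the rectangle $C_T$ (residues $x^{\rho}/(\rho\zeta'(\rho))$, no pole at $1/2$), the tails on $\mathrm{Re}(s)=\varepsilon$ via (\ref{eq:second est}), and the horizontal sides via (\ref{eq:special est}). The residue and tail parts are fine.

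The gap is in your last step, where you turn $T^{-1+\varepsilon}(x^{\sigma_0}-x^{\varepsilon})/\log x$ into $(x-x^{\varepsilon})/(T^{1-\varepsilon}\log x)$. With $\sigma_0=1+1/|\log x|$ this fails as $x\to1$, which is exactly the range the clause about reading the bound at $x=1$ as a limit is meant to cover. For $x>1$,
\[
\frac{ex-x^{\varepsilon}}{\log x}=e\,\frac{x-x^{\varepsilon}}{\log x}+\frac{(e-1)x^{\varepsilon}}{\log x},
\]
and the second term is unbounded as $x\to1^{+}$, so it is not a harmless $O(T^{-1+\varepsilon})$ correction. For $0<x<1$, the inequality $x^{\sigma_0}=x/e\le x$ together with $\log x<0$ gives $(x^{\sigma_0}-x^{\varepsilon})/\log x\ge(x-x^{\varepsilon})/\log x$, which is the wrong direction. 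The two quantities are also not comparable near $x=1^{-}$: $(x-x^{\varepsilon})/\log x\to1-\varepsilon$, while $(x/e-x^{\varepsilon})/\log x\to+\infty$. Moreover, $\sigma_0>2$ for $e^{-1}<x<e$, $x\neq1$, so (\ref{eq:special est}) does not even cover the whole horizontal side there. The paper's own proof stops at $(x^{\sigma_0}-x^{\varepsilon})/(T^{1-\varepsilon}\log x)$ and passes over the same point.

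The repair is cheap because $M(x)$ does not depend on $\sigma_0$: for $e^{-1}\le x\le e$, take $\sigma_0=2$. Then Perron's remainder is $\ll 1+(4^{2}+x^{2})\zeta(2)/T\ll 1$. This also avoids the $4^{\sigma_0}$ term in Perron's lemma, which blows up as $x\to1$ with the original $\sigma_0$. The horizontal sides then contribute $\ll T^{-1+\varepsilon}\int_{\varepsilon}^{2}x^{\sigma}\,d\sigma\ll T^{-1+\varepsilon}$.

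For $|\log x|>1$ one has $\sigma_0<2$, and your comparison is valid:
\begin{itemize}
\item if $x>e$, then $x-x^{\varepsilon}\ge(1-e^{\varepsilon-1})x$, so $(ex-x^{\varepsilon})/\log x\ll_{\varepsilon}(x-x^{\varepsilon})/\log x$;
\item if $x<e^{-1}$, then $x^{\varepsilon}-x\ge(1-e^{\varepsilon-1})x^{\varepsilon}$, so $(x^{\varepsilon}-x/e)/|\log x|\ll_{\varepsilon}(x^{\varepsilon}-x)/|\log x|$.
\end{itemize}
With this case split, the stated bound for $R^{*}(x,T_\nu)$ holds uniformly for all $x>0$.
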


\begin{proof}
The proof is very similar to the previous one. Take $x\geq1$, then
we have just
\[
\frac{1}{\pi i}\text{Re}\left(\int_{\varepsilon+iT}^{\varepsilon+i\infty}\frac{1}{\zeta\left(s\right)}\frac{x^{s}}{s}ds\right)\ll x^{\varepsilon}\int_{T}^{+\infty}t^{-3/2+2\varepsilon}dt\ll\frac{x^{\varepsilon}}{T^{1/2-2\varepsilon}}
\]
and if $0<x<1$ the same estimation gives
\[
\frac{1}{\pi i}\text{Re}\left(\int_{\varepsilon+iT}^{\varepsilon+i\infty}\frac{1}{\zeta\left(s\right)}\frac{x^{s}}{s}ds\right)\ll1.
\]
Then, assuming $T\in\mathcal{T}$, we need to consider
\[
\frac{1}{\pi}\text{Im}\left(\int_{\varepsilon+iT}^{\sigma_{0}+iT}\frac{1}{\zeta\left(s\right)}\frac{x^{s}}{s}ds\right)\ll T^{-1+\varepsilon}\int_{\varepsilon}^{\sigma_{0}}x^{\sigma}d\sigma\ll \frac{x^{\sigma_{0}}-x^{\varepsilon}}{T^{1-\varepsilon}\log\left(x\right)}
\]
and note that the previous estimation holds for every $x>0$.
\end{proof}
Note that $I(x)$ is bounded by a constant for every $x>0$ (see \cite{F})
hence it can be included in the error term. Concerning $I^{*}(x)$, it
is enough to see, for our aims, that the integral is trivially $O(x^{\varepsilon})$,
even if it is well-known that a better approximation can be made,
due to the fact that we can ``push'' the real part of the line of
integration to $-\infty$ if $x>1$ (if $0<x\leq1$ the trivial approximation
gives immediately $O(1)$).

So we finally get the following Corollary, that allows us to remove
the condition $T\in\mathcal{T}.$
\begin{cor}
\label{cor:IMP}Assume RH and SZ. Then for all $T\geq1$ and $x>0$
we have
\[
L(x)=\frac{x^{1/2}}{\zeta\left(\frac{1}{2}\right)}+\sum_{\rho:\left|\gamma\right|<T}\frac{\zeta\left(2\rho\right)x^{\rho}}{\zeta^{\prime}\left(\rho\right)\rho}+R\left(x,T\right)
\]
where
\[
R\left(x,T\right)\ll1+\frac{x\left(\left|\log\left(x\right)\right|+1\right)}{T}+\frac{x-x^{1/4}}{T^{1-\varepsilon}\log\left(x\right)}.
\]
\end{cor}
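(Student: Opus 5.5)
The plan is to deduce Corollary \ref{cor:IMP} from the preceding theorem for $L(x)$, which gives the formula for $T=T_\nu\in\mathcal{T}$. Three things must be absorbed into $R(x,T)$: the terms $E(x)$ and $I(x)$, and the change from $T_\nu$ to an arbitrary $T\geq1$. First, $|E(x)|\leq1/2$ trivially. Also $I(x)\ll1$ uniformly for $x>0$ by the result cited from \cite{F}, as remarked just before the statement. So both are $O(1)$.

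For a general $T\geq1$, I choose $\nu=\lfloor T\rfloor$. Then $\nu\leq T_\nu\leq\nu+1$ and hence $|T-T_\nu|\leq 1$. If $T<T_\nu$ I will instead use the element of $\mathcal{T}$ just below $T$ when it exists; when $T$ is small, everything is $\asymp1$ and can be handled directly. With this choice $T\asymp T_\nu$, so the error terms $x(|\log x|+1)/T_\nu$ and $(x-x^{1/4})/(T_\nu^{1-\varepsilon}\log x)$ are comparable to the same expressions with $T$ in place of $T_\nu$. It remains to bound the difference of the two truncated sums,
\[
\sum_{\rho:\,\min(T,T_\nu)\leq|\gamma|<\max(T,T_\nu)}\frac{\zeta(2\rho)x^{\rho}}{\zeta^{\prime}(\rho)\rho},
\]
a sum over zeros in a window of length at most $1$ at height about $T$.

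This window sum is the main obstacle, since $1/\zeta'(\rho)$ can be large for an individual zero. On RH, $|x^\rho|=x^{1/2}$, so the window sum is at most $x^{1/2}\sum_{\text{window}}|\zeta(2\rho)|/|\rho\zeta'(\rho)|$. I would first try the SZ-type bounds listed after Conjecture \ref{conj:simple zeros}. Taking differences of the cumulative bound $\sum_{0<\gamma<T}|\zeta(2\rho)|/|\rho\zeta'(\rho)|\ll(\log T)^{3/2}\log\log T$ is too lossy. It is better to use $\sum_{0<\gamma<T}1/|\zeta'(\rho)|\ll T(\log T)^{1/2}$ together with $\zeta(2\rho)\ll T^{\varepsilon}$ and $|\rho|\asymp T$. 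By partial summation in short windows this should give a window contribution of order $x^{1/2}T^{\varepsilon}(\log T)^{1/2}$ over $T$, times the mass in the window. Under SZ, Gonek--Hejhal-type heuristics suggest this is $\ll x^{1/2}T^{-1+\varepsilon}$.

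This last bound is dominated by the existing error terms. For $x\geq1$, $x^{1/2}\leq 1+(x-x^{1/4})/\log x$ up to constants. For $0<x<1$, $x^{1/2}T^{-1+\varepsilon}\ll1$. If the short-window estimate cannot be extracted cleanly from SZ alone, I would fall back on the standard device of Theorem 4.5 in \cite{H}. There one re-runs the contour argument with horizontal segments at height $T$, which need not lie in $\mathcal{T}$. Since $T$ lies within distance $1$ of a good height $T_\nu$, the integrand on the segment can be compared to that on the good segment, in the same way the truncation is removed for the Mertens function in \cite{H}. This also costs only $O(1+x\,T^{-1+\varepsilon}/\log x)$, and the corollary follows after renaming $\varepsilon$.
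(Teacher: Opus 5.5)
Your reduction steps are fine and match the paper: $E(x)$ and $I(x)$ are $O(1)$, and since $T\asymp T_\nu$ the error terms at $T_\nu$ can be rewritten at $T$. The gap is the step you flag yourself. You need a bound, uniform in $x>0$, for the window sum $\sum_{T_\nu\le|\gamma|<T}\zeta(2\rho)x^{\rho}/(\rho\zeta'(\rho))$, and neither of your routes supplies it.

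A cumulative first-moment bound like SZ gives no control of a single window of length $\le 1$. A priori all of the mass $T(\log T)^{1/2}$ can sit there, which only gives $x^{1/2}(\log T)^{1/2}\log\log T$; this is not $O(1)$ for fixed $x\ge1$ as $T\to\infty$. Appealing to Gonek--Hejhal heuristics is not an argument, and $x^{1/2}T^{-1+\varepsilon}$ is more than the standard proof gives.

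Your fallback is circular. By the residue theorem, the difference between the horizontal integrals at heights $T$ and $T_\nu$ (together with the short vertical pieces) is exactly this window sum. At an arbitrary height $T$ there is no pointwise bound for $1/\zeta(\sigma+iT)$, because $T$ can be arbitrarily close to an ordinate $\gamma$. So the cost $O(1+xT^{-1+\varepsilon}/\log x)$ you state is asserted, not derived.

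What the paper does, following \cite{H}, is apply Cauchy--Schwarz against a mean-square bound for $1/\zeta'(\rho)$, taking $T_\nu\le T$ as in your choice:
\[
\sum_{T_\nu\le|\gamma|<T}\Bigl|\frac{\zeta(2\rho)x^{\rho}}{\rho\zeta'(\rho)}\Bigr|\le x^{1/2}\Bigl(\sum_{|\gamma|\ge T_\nu}\frac{1}{|\rho\zeta'(\rho)|^{2}}\Bigr)^{1/2}\Bigl(\sum_{T_\nu\le|\gamma|<T}|\zeta(2\rho)|^{2}\Bigr)^{1/2}\ll x^{1/2}\frac{(\log T)^{1/2}\log\log T}{T^{1/2}}.
\]
This uses three facts: the window contains $\ll\log T$ zeros; $|\zeta(1+2i\gamma)|\ll\log\log T$ on RH; and $\sum_{|\gamma|\ge T}|\rho\zeta'(\rho)|^{-2}\ll T^{-1}$. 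The last follows by partial summation from the hypothesis $J_{-1}(T)=\sum_{0<\gamma<T}|\zeta'(\rho)|^{-2}\ll T$ of \cite{H}.

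That hypothesis implies the first-moment form of SZ stated in the paper (by Cauchy--Schwarz), but not conversely. So you were right that SZ as written is too weak for short windows; the second moment is the missing input.

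The resulting bound is $\ll1$ for $0<x\le1$, which is the case the paper writes out. For $x>1$ it is absorbed, as in \cite{H}, Corollary 4.9, into $1+x(|\log x|+1)/T+(x-x^{1/4})/(T^{1-\varepsilon}\log x)$. One separates the cases where $x^{1/2}(\log T)^{1/2}\log\log T\,T^{-1/2}$ is at most $1$ or not, and in the latter case compares $\log x$ with $\log T(\log\log T)^{2}$.
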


\begin{proof}
The proof is essentially the same of \cite{H}, Corollary $4.9$.
Indeed, if $\nu\geq1$, taking $\nu\leq T_{\nu}\leq T\leq\nu+1$ we
have, for all $0<\alpha\leq1$, that
\[
\frac{1}{T_{\nu}^{\alpha}}-\frac{1}{T^{\alpha}}\leq\frac{\left(\nu+1\right)^{\alpha}-\nu^{\alpha}}{\nu^{\alpha}\left(\nu+1\right)^{\alpha}}\leq\frac{1}{T^{\alpha}}
\]
and if $\nu\leq T<T_{\nu}\leq\nu+1$ we have similarly that
\[
\frac{1}{T^{\alpha}}-\frac{1}{T_{\nu}^{\alpha}}\leq\frac{\left(\nu+1\right)^{\alpha}-\nu^{\alpha}}{\nu^{\alpha}\left(\nu+1\right)^{\alpha}}\leq\frac{1}{T^{\alpha}}
\]
 and then we can replace $R\left(x,T_{\nu}\right)$ with $R\left(x,T\right).$
Moreover, following \cite{H}, we observe that if $0<x\leq1$ we get
\[
\left|\sum_{\rho:T_{\nu}\leq\left|\gamma\right|<T}\frac{\zeta\left(2\rho\right)x^{\rho}}{\zeta^{\prime}\left(\rho\right)\rho}\right|\ll x^{1/2}\frac{\log^{1/2}\left(T\right)\log\left(\log\left(T\right)\right)}{\sqrt{T}}\ll1
\]
and hence the thesis.
\end{proof}
Very similar computations lead to the following corollary.
\begin{cor}
\label{cor:IMP-1}Assume RH and SZ. Then for all $T\geq1$, $\varepsilon>0$
and $x>0$ we have
\[
M(x)=\sum_{\rho:\left|\gamma\right|<T}\frac{x^{\rho}}{\zeta^{\prime}\left(\rho\right)\rho}+R^{*}\left(x,T\right)
\]
where
\[
R^{*}\left(x,T\right)\ll1+x^{\varepsilon}+\frac{x\left(\left|\log\left(x\right)\right|+1\right)}{T}+\frac{x-x^{\varepsilon}}{T^{1-\varepsilon}\log\left(x\right)}.
\]
\end{cor}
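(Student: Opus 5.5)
The plan is to repeat, for $M$, the argument just given for Corollary \ref{cor:IMP}, starting from the theorem for $M(x)$ with $T_{\nu}\in\mathcal{T}$. That theorem gives
\[
M(x)=\sum_{\left|\gamma\right|<T_{\nu}}\frac{x^{\rho}}{\zeta^{\prime}\left(\rho\right)\rho}+E^{*}(x)+I^{*}(x)+R^{*}\left(x,T_{\nu}\right),
\]
and three things must be done with it. First, $E^{*}(x)$ and $I^{*}(x)$ are absorbed into the error. Second, $T_{\nu}$ is replaced by an arbitrary $T\geq1$ in the remainder. Third, the zeros with $\gamma$ between $T_{\nu}$ and $T$ are moved in or out of the sum at acceptable cost.

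The first two steps are routine.
\begin{itemize}
\item Trivially $|E^{*}(x)|\leq1/2$.
\item Shifting to the line $\text{Re}(s)=\varepsilon$ and using (\ref{eq:second est}) gives $\zeta(\varepsilon+it)^{-1}\ll|t|^{-1/2+2\varepsilon}$. Hence $I^{*}(x)\ll x^{\varepsilon}\int|s|^{-3/2+2\varepsilon}|dt|\ll x^{\varepsilon}$, with the region $|t|\leq1$ handled by $\zeta$ having no zeros there. This is the source of the extra term $1+x^{\varepsilon}$ in the statement, as remarked after the theorem for $M$.
\item Given $T\geq1$, choose $\nu$ with $\nu\leq T,T_{\nu}\leq\nu+1$. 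The elementary inequality $\bigl|T_{\nu}^{-\alpha}-T^{-\alpha}\bigr|\leq T^{-\alpha}$ from the proof of Corollary \ref{cor:IMP} then converts each term of $R^{*}(x,T_{\nu})$ into the same shape with $T$. The term $x^{\varepsilon}T_{\nu}^{-1/2+2\varepsilon}$ is simply bounded by $x^{\varepsilon}$.
\end{itemize}

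The main step is the tail $\sum_{T_{\nu}\leq|\gamma|<T}x^{\rho}/(\zeta^{\prime}(\rho)\rho)$, where the range has length at most $1$. Under RH, $|x^{\rho}|=x^{1/2}$ and $|\rho|\asymp T$. By partial summation from Conjecture \ref{conj:simple zeros}, or directly from the bound $\sum_{0<\gamma<T}|\rho\zeta^{\prime}(\rho)|^{-1}\ll(\log T)^{3/2}$, I expect the sum of $1/|\zeta^{\prime}(\rho)|$ over a window $[\nu,\nu+1]$ to be $\ll T^{1/2}(\log T)^{c}$ at worst. This gives
\[
\sum_{T_{\nu}\leq|\gamma|<T}\frac{x^{1/2}}{\left|\zeta^{\prime}(\rho)\right|T}\ll x^{1/2}\frac{(\log T)^{c}}{T^{1/2}}.
\]
The bound is then distributed by cases on $x$.
\begin{itemize}
\item For $0<x\leq1$ it is $\ll1$.
\item For $x>1$ it is absorbed into the term $(x-x^{\varepsilon})/(T^{1-\varepsilon}\log x)\gg x/T^{1-\varepsilon}$ when $x\geq T$. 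When $x<T$ it is $\ll(\log T)^{c}$, which is not obviously $\ll1$.
\end{itemize}

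So the delicate point is controlling the window sum sharply enough. My first attempt is to use SZ as Hughes (\cite{H}, Corollary $4.9$) does for the Mertens function: the window average of $1/|\zeta^{\prime}|$ is $\ll T(\log T)^{1/2}/T$ summed over about $\log T$ zeros. If that still leaves a logarithmic loss, I will absorb it with the freedom in $\varepsilon$. For $x\geq1$ one has $x^{1/2}(\log T)^{c}T^{-1/2}\ll x^{\varepsilon}+x/T^{1-\varepsilon}$, since either $x\leq T^{1-2\varepsilon}$, where the bound is $\ll x^{\varepsilon}$ after adjusting $\varepsilon$ and using $T^{-\varepsilon}(\log T)^{c}\ll1$, or $x>T^{1-2\varepsilon}$, where the second term dominates. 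This yields the stated $R^{*}(x,T)$.
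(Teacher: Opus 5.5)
Your route is the paper's: absorb $E^{*}$ and $I^{*}(x)\ll x^{\varepsilon}$, pass from $T_{\nu}$ to $T$ with the elementary inequality, then control the zeros between $T_{\nu}$ and $T$ as in \cite{H}. The first three steps are fine. The gap is in the one non-routine step, the window sum: neither justification you give produces the factor $T^{-1/2}$ that everything afterwards depends on.

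Conjecture~\ref{conj:simple zeros} and its partial-summation consequence $\sum_{0<\gamma<T}|\rho\zeta'(\rho)|^{-1}\ll(\log T)^{3/2}$ bound \emph{cumulative} sums. For a window of length $1$ they give nothing better than the cumulative bound itself, $\sum_{T\le\gamma<T+1}|\zeta'(\rho)|^{-1}\ll T(\log T)^{1/2}$, so the tail is only $\ll x^{1/2}(\log T)^{1/2}$. Your ``window average'' reasoning is a heuristic, not a bound. A single zero in $[T,T+1]$ with very small $|\zeta'(\rho)|$ is compatible with SZ and can carry essentially the whole cumulative sum. Nor can the loss be absorbed by adjusting $\varepsilon$. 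For fixed $x>1$ and $T\to\infty$ the claimed $R^{*}(x,T)$ is $O_{x}(1)$, while $x^{1/2}(\log T)^{c}$ is unbounded. Your final case analysis uses the $T^{-1/2}$, which is exactly what has not been established.

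What is needed is a second-moment input of Gonek--Hejhal type, $J_{-1}(T):=\sum_{0<\gamma<T}|\zeta'(\rho)|^{-2}\ll T$. This is the kind of hypothesis behind the window bound the paper borrows from \cite{H}; the first-moment bound in Conjecture~\ref{conj:simple zeros} follows from it by Cauchy--Schwarz, since $N(T)^{1/2}J_{-1}(T)^{1/2}\ll T(\log T)^{1/2}$. Apply Cauchy--Schwarz instead to the $O(\log T)$ zeros of the window. This gives $\sum_{T\le\gamma<T+1}|\zeta'(\rho)|^{-1}\ll(T\log T)^{1/2}$, hence a tail $\ll x^{1/2}(\log T)^{1/2}T^{-1/2}$. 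That is the shape of the bound quoted in the proof of Corollary~\ref{cor:IMP}, there with an extra $\log\log T$ from $\zeta(2\rho)$.

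With that bound your case split goes through, with two small corrections:
\begin{itemize}
\item Put the threshold at $x\le T^{1-\varepsilon}$, where the tail is $\ll T^{-\varepsilon/2}(\log T)^{1/2}\ll 1$, rather than at $T^{1-2\varepsilon}$. Just above $T^{1-2\varepsilon}$ the ratio of $x/T^{1-\varepsilon}$ to the tail is only about $(\log T)^{-1/2}$, so that term does not dominate there.
\item For large $x$ one has $(x-x^{\varepsilon})/\log x\asymp x/\log x$, not $\gg x$. The margin $T^{\varepsilon/2}$ gained from the threshold $T^{1-\varepsilon}$ is what absorbs this extra $\log x$.
\end{itemize}
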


\section{convolution of the summatory function \texorpdfstring{$L(x)$}{L(x)}}

In this part we prove the explicit formula for the Laplace convolution
of $L(x)$ with itself. We recall that this is also the Ces\`aro
average of order $1$ of the function $S(n)$. It is important to
remark that, in the error term, we have to consider both the cases
when $x$ is near to $0$ and when it goes to infinity. Note that if
$0<x<1$ the error $O_{\varepsilon}\left(x\right)$ incorporates the
main terms, but this is not problem for our aims.
\begin{thm}
\label{thm:convol}Assume RH and SZ. Then, for any $x>0$ and sufficiently
small $\varepsilon>0$ we have
\[
  C_\lambda(x)
  =
\sum_{n\leq x}S\left(n\right)\left(x-n\right)=\int_{0}^{x}L\left(y\right)L\left(x-y\right)dy
\]
\[
=\frac{x^{2}\pi}{8\zeta\left(\frac{1}{2}\right)^{2}}+\frac{\sqrt{\pi}}{\zeta\left(\frac{1}{2}\right)}\sum_{\rho}\frac{\zeta\left(2\rho\right)\Gamma\left(\rho\right)x^{\rho+3/2}}{\zeta^{\prime}\left(\rho\right)\Gamma\left(\rho+\frac{5}{2}\right)}
\]
\[
+\sum_{\rho_{1}}\frac{\zeta\left(2\rho_{1}\right)}{\zeta^{\prime}\left(\rho_{1}\right)}\sum_{\rho_{2}}\frac{\zeta\left(2\rho_{2}\right)x^{\rho_{1}+\rho_{2}+1}}{\zeta^{\prime}\left(\rho_{2}\right)}\frac{\Gamma\left(\rho_{1}\right)\Gamma\left(\rho_{2}\right)}{\Gamma\left(\rho_{1}+\rho_{2}+2\right)}+O_{\varepsilon}\left(x^{3/2+\varepsilon}+x\right)
\]
where the single series and the double series over the non-trivial
zeros are absolutely convergent.
\end{thm}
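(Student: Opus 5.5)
We substitute the truncated explicit formula of Corollary \ref{cor:IMP} for both factors in $\int_{0}^{x}L(y)L(x-y)\,dy$ and integrate term by term, using the Beta integral
\[
\int_{0}^{x}y^{\alpha-1}(x-y)^{\beta-1}dy=x^{\alpha+\beta-1}\frac{\Gamma(\alpha)\Gamma(\beta)}{\Gamma(\alpha+\beta)},\qquad \text{Re}(\alpha),\text{Re}(\beta)>0 .
\]
The first identity, $\sum_{n\leq x}S(n)(x-n)=\int_{0}^{x}L(y)L(x-y)dy$, is elementary. Write $L(y)=\frac{y^{1/2}}{\zeta(1/2)}+\Sigma_T(y)+R(y,T)$, where $\Sigma_T(y)=\sum_{|\gamma|<T}\frac{\zeta(2\rho)y^{\rho}}{\zeta'(\rho)\rho}$. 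Multiplying out gives nine products.

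\textbf{Main terms.} The product of the two $y^{1/2}$ terms, with $\alpha=\beta=3/2$, gives $\frac{x^{2}}{\zeta(1/2)^{2}}\frac{\Gamma(3/2)^{2}}{\Gamma(3)}=\frac{\pi x^{2}}{8\zeta(1/2)^{2}}$. The two cross terms $y^{1/2}\cdot(x-y)^{\rho}$ are equal by the symmetry $y\mapsto x-y$. Each gives $\frac{\Gamma(3/2)\Gamma(\rho+1)}{\rho\,\Gamma(\rho+5/2)}x^{\rho+3/2}=\frac{\sqrt{\pi}}{2}\frac{\Gamma(\rho)}{\Gamma(\rho+5/2)}x^{\rho+3/2}$, so together they produce the factor $\sqrt{\pi}/\zeta(1/2)$. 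The $\Sigma_T\Sigma_T$ term gives, using $\Gamma(\rho_j+1)/\rho_j=\Gamma(\rho_j)$, the double sum truncated at $|\gamma_1|,|\gamma_2|<T$ with kernel $\Gamma(\rho_1)\Gamma(\rho_2)/\Gamma(\rho_1+\rho_2+2)$.

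\textbf{Absolute convergence and $T\to\infty$.} The double series converges absolutely by Theorem \ref{thm:double ser} with $k=1$ and $f(\rho)=\zeta(2\rho)/\zeta'(\rho)$. The hypothesis on $f$ follows from the SZ consequence $\sum_{0<\gamma<T}|\zeta(2\rho)|/|\rho\zeta'(\rho)|\ll(\log T)^{3/2}\log\log T=o(T^{\alpha})$. The single series converges absolutely because, by Stirling, $\Gamma(\rho)/\Gamma(\rho+5/2)\ll|\gamma|^{-5/2}$ and $|\zeta(2\rho)/\zeta'(\rho)|$ has partial sums $\ll T(\log T)^{3/2}\log\log T$. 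Hence the truncated sums converge to the full series as $T\to\infty$, and the tails vanish.

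\textbf{Error terms (the main obstacle).} We must control the integrals involving $R(y,T)$: $\int_0^x R(y,T)\,(x-y)^{1/2}dy$, $\int_0^x R(y,T)\Sigma_T(x-y)dy$ and $\int_0^x R(y,T)R(x-y,T)dy$. The plan is to let $T\to\infty$ for fixed $x$, so that $R(y,T)\ll 1$ uniformly for $y\in(0,x]$. The terms $x|\log x|/T$ and $(x-x^{1/4})/(T^{1-\varepsilon}\log x)$ tend to $0$; the latter is bounded near $x=1$ and is integrable since it is bounded for $y\in(0,x]$. We also need $\Sigma_T(y)$ to be bounded in $L^{1}$ uniformly in $T$. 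For this, use Corollary \ref{cor:IMP} itself: $\Sigma_T(y)=L(y)-y^{1/2}/\zeta(1/2)-R(y,T)$, and under RH $L(y)\ll y^{1/2+\varepsilon}+1$ (from the Perron representation, moving the line to $1/2+\varepsilon$). This gives $\Sigma_T(y)\ll_{\varepsilon}y^{1/2+\varepsilon}+1$ uniformly in $T$, with $R$ bounded. Then
\begin{itemize}
\item $\int_0^x|R|\,(x-y)^{1/2}dy\ll x^{3/2}$,
\item $\int_0^x|R|\,|\Sigma_T(x-y)|dy\ll x^{3/2+\varepsilon}+x$,
\item $\int_0^x|R||R|\,dy\ll x$.
\end{itemize}
Dominated convergence as $T\to\infty$, with the limit of $R(y,T)$ existing pointwise, finishes the argument and yields $O_{\varepsilon}(x^{3/2+\varepsilon}+x)$.

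The delicate point is exactly this uniform-in-$T$ bound for the truncated zero sum and the justification of termwise integration of the finite sum $\Sigma_T$, which is trivial for finite $T$. The behaviour near $y=0$ and near $y=x$ is covered by the bound $R\ll1$ valid for all $y>0$, which is why Corollary \ref{cor:IMP} was established for $0<y<1$.
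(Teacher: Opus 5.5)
Your proposal is correct and is essentially the paper's proof: you substitute Corollary~\ref{cor:IMP}, evaluate the main part by Beta integrals, and bound the remainder contributions using $L(y)\ll y^{1/2+\varepsilon}+1$ together with $R(y,T)\ll 1$ on $(0,x]$ for $T$ large; absolute convergence then comes from Stirling plus SZ for the single series and from Theorem~\ref{thm:double ser} with $k=1$ for the double series. The differences are cosmetic: the paper regroups the remainder as $2\int_0^x L(y)R(x-y,T)\,dy-\int_0^x R(y,T)R(x-y,T)\,dy$, which is your identity $\Sigma_T=L-y^{1/2}/\zeta(1/2)-R$ in another form, and you need neither dominated convergence nor a pointwise limit of $R(y,T)$, because a $T$-uniform error bound combined with convergence of the truncated series already gives the result.
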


\begin{proof}
First of all we note that, using Corollary \ref{cor:IMP}, we have
\[
\int_{0}^{x}L\left(y\right)L\left(x-y\right)dy
\]
\[
=\int_{0}^{x}\left(\frac{y^{1/2}}{\zeta\left(\frac{1}{2}\right)}+\sum_{\rho:\left|\gamma\right|<T}\frac{\zeta\left(2\rho\right)y^{\rho}}{\zeta^{\prime}\left(\rho\right)\rho}\right)\left(\frac{(x-y)^{1/2}}{\zeta\left(\frac{1}{2}\right)}+\sum_{\rho:\left|\gamma\right|<T}\frac{\zeta\left(2\rho\right)(x-y)^{\rho}}{\zeta^{\prime}\left(\rho\right)\rho}\right)dy
\]
\[
+2\int_{0}^{x}L\left(y\right)R\left(x-y,T\right)dy-\int_{0}^{x}R\left(y\right)R\left(x-y\right)dy.
\]
\[
=J_{1}+J_{2}+J_{3}
\]
say. The integration of $J_{1}$ is trivial since all the products
lead, essentially, to Beta functions. So we have
\[
J_{1}=\frac{x^{2}\pi}{8\zeta\left(\frac{1}{2}\right)^{2}}+\frac{\sqrt{\pi}}{\zeta\left(\frac{1}{2}\right)}\sum_{\rho:\left|\gamma\right|<T}\frac{\zeta\left(2\rho\right)\Gamma\left(\rho\right)x^{\rho+3/2}}{\zeta^{\prime}\left(\rho\right)\Gamma\left(\rho+\frac{5}{2}\right)}
\]
\[
+\sum_{\rho_{1}:\left|\gamma_{1}\right|<T}\frac{\zeta\left(2\rho_{1}\right)}{\zeta^{\prime}\left(\rho_{1}\right)}\sum_{\rho_{2}:\left|\gamma_{2}\right|<T}\frac{\zeta\left(2\rho_{2}\right)x^{\rho_{1}+\rho_{2}+1}}{\zeta^{\prime}\left(\rho_{2}\right)}\frac{\Gamma\left(\rho_{1}\right)\Gamma\left(\rho_{2}\right)}{\Gamma\left(\rho_{1}+\rho_{2}+2\right)}.
\]
Now we study $J_{2}$. It is well-known, under RH, that
\[
L\left(x\right)\ll x^{1/2+\varepsilon},\,\forall\varepsilon>0
\]
as $x\rightarrow+\infty$, so we can conclude that for every $\varepsilon>0$
and for every $x>0$ under RH we have the estimation
\[
L\left(x\right)\ll x^{1/2+\varepsilon}+1.
\]

From Corollary \ref{cor:IMP} and from the fact that $$ \frac{x-x^{1/4}}{\log\left(x\right)}\leq x+1$$ for every $x>0,\,x\neq1$, we have
\[
\int_{0}^{x}L\left(y\right)R\left(x-y,T\right)dy
\]
\[
\ll\int_{0}^{x}y^{1/2+\varepsilon}\left[1+\frac{(x-y)\left(\left|\log\left(x-y\right)\right|+1\right)}{T}+\frac{x-y+1}{T^{1-\varepsilon}}\right]dy
\]
\[
+\int_{0}^{x}\left[1+\frac{(x-y)\left(\left|\log\left(x-y\right)\right|+1\right)}{T}+\frac{x-y+1}{T^{1-\varepsilon}}\right]dy
\]
\[
=J_{21}+J_{22}.
\]
\[
\int_{0}^{x}y^{1/2+\varepsilon}dy\ll_{\varepsilon}x^{3/2+\varepsilon},\,\,\frac{1}{T}\int_{0}^{x}y^{1/2+\varepsilon}(x-y)dy=\frac{x^{5/2+\varepsilon}}{T}B\left(\frac{3}{2}+\varepsilon,2\right)
\]
so we have, essentially, just to consider
\[
\frac{1}{T}\int_{0}^{x}y^{1/2+\varepsilon}(x-y)\left|\log(x-y)\right|dy=\frac{1}{T}\int_{0}^{x}\left(x-y\right)^{1/2+\varepsilon}y\left|\log(y)\right|dy
\]
Note that, if $0<v\leq1$, then
\[
v\left|\log\left(v\right)\right|=v\log\left(1/v\right)<1
\]
hence if $0<x\leq1$
\[
\frac{1}{T}\int_{0}^{x}\left(x-y\right)^{1/2+\varepsilon}y\left|\log(y)\right|dy\leq\frac{1}{T}\int_{0}^{x}\left(x-y\right)^{1/2+\varepsilon}dy=\frac{x^{3/2+\varepsilon}}{T}.
\]
If $x>1$, we have
\[
\frac{1}{T}\int_{0}^{1}\left(x-y\right)^{1/2+\varepsilon}y\left|\log(y)\right|dy\ll\frac{x^{1/2+\varepsilon}}{T}
\]
and
\[
\frac{1}{T}\int_{1}^{x}\left(x-y\right)^{1/2+\varepsilon}y\log(y)dy\leq\log\left(x\right)\frac{1}{T}\int_{0}^{x}\left(x-y\right)^{1/2+\varepsilon}ydy
\]
\[
\ll\frac{\log\left(x\right)x^{5/2+\varepsilon}}{T}.
\]
So we can conclude that
\[
J_{21}\ll_{\varepsilon}x^{3/2+\varepsilon}+\frac{\left(\log\left(x\right)x^{5/2+\varepsilon}\right)_{x>1}+x_{0<x\leq1}^{3/2+\varepsilon}}{T}+\frac{x^{3/2+\varepsilon}+x^{5/2+\varepsilon}}{T^{1-\varepsilon}}
\]
and in a similar way
\[
J_{22}\ll x+\frac{x^{2}\log\left(x\right)_{x>1}+1_{0<x\leq1}}{T}+\frac{x+x^{2}}{T^{1-\varepsilon}}.
\]
It remains to consider $J_{3}.$ We have
\[
J_{3}\ll\int_{0}^{x}\left(1+\frac{y\left(\left|\log\left(y\right)\right|+1\right)}{T}+\frac{y+1}{T^{1-\varepsilon}}\right)
\]
\[
\times\left(1+\frac{\left(x-y\right)\left(\left|\log\left(x-y\right)\right|+1\right)}{T}+\frac{x-y+1}{T^{1-\varepsilon}}\right)dy.
\]
If we define
\[
A_{T}\left(y\right)=C_{T}\left(y\right):=1+\frac{1}{T^{1-\varepsilon}}\ll1
\]
\[
B_{T}\left(y\right):=\frac{y\left(\left|\log\left(y\right)\right|+1\right)}{T}+\frac{y}{T^{1-\varepsilon}},D_{T}\left(y\right):=\frac{\left(x-y\right)\left(\left|\log\left(x-y\right)\right|+1\right)}{T}+\frac{x-y}{T^{1-\varepsilon}}
\]
we get
\begin{equation}
J_{3}\ll x+\int_{0}^{x}D_{T}\left(y\right)dy+\int_{0}^{x}B_{T}\left(y\right)dy+\int_{0}^{x}B_{T}\left(y\right)D_{T}\left(y\right)dy\label{eq:eq ABCD}
\end{equation}
and it is very simple to observe that all the integrals in (\ref{eq:eq ABCD})
are convergent for every $x>0$ and they tend to $0$ as $T\rightarrow+\infty$,
hence the main formula follows.

It remains to observe that, actually, the series converges absolutely.
Indeed, for the first sum, by the classical Stirling formula we
have
\[
\sum_{\rho:\gamma>0}\left|\frac{\zeta\left(2\rho\right)\Gamma\left(\rho\right)}{\zeta^{\prime}\left(\rho\right)\Gamma\left(\rho+\frac{5}{2}\right)}\right|\ll\sum_{\rho:\gamma>0}\left|\frac{\zeta\left(2\rho\right)}{\zeta^{\prime}\left(\rho\right)}\right|\frac{1}{\gamma^{5/2}}
\]
and such series is convergent since, by SZ, we have
\[
\sum_{\rho:0<\gamma<T}\left|\frac{\zeta\left(2\rho\right)}{\zeta^{\prime}\left(\rho\right)}\right|\frac{1}{\gamma^{5/2}}\ll\sum_{\rho:0<\gamma<T}\left|\frac{\zeta\left(2\rho\right)}{\rho\zeta^{\prime}\left(\rho\right)}\right|\frac{1}{\gamma^{3/2}}
\]
\[
\ll\frac{\left(\log T\right)^{3/2}\log\left(\log\left(T\right)\right)}{T^{3/2}}+\int_{14}^{T}\left(\left(\log t\right)^{3/2}\log\left(\log\left(t\right)\right)\right)t^{-5/2}dt
\]
and the last integral is convergent if $T\rightarrow+\infty$. 

Clearly, by symmetry, the considered case $\gamma>0$ is enough. For
the double series the convergence follows from Theorem \ref{thm:double ser}.
\end{proof}
Again, it is quite simple to prove a similar result for the $M(x)$
function.
\begin{thm}
\label{thm:convol-1}Assume RH and SZ. Then, for any $x>0$ and sufficiently
small $\varepsilon>0$ we have
\[
  C_\mu(x)
  =
\sum_{n\leq x}S^{*}\left(n\right)\left(x-n\right)=\int_{0}^{x}M\left(y\right)M\left(x-y\right)dy
\]
\[
=\sum_{\rho_{1}}\sum_{\rho_{2}}\frac{x^{\rho_{1}+\rho_{2}+1}}{\zeta^{\prime}\left(\rho_{1}\right)\zeta^{\prime}\left(\rho_{2}\right)}\frac{\Gamma\left(\rho_{1}\right)\Gamma\left(\rho_{2}\right)}{\Gamma\left(\rho_{1}+\rho_{2}+2\right)}+O_{\varepsilon}\left(x^{3/2+\varepsilon}+x^{\varepsilon}\right)
\]
where the double series over the non-trivial zeros is absolutely convergent.
\end{thm}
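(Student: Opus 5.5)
We follow the proof of Theorem \ref{thm:convol} step by step, with $L$ replaced by $M$ and Corollary \ref{cor:IMP} replaced by Corollary \ref{cor:IMP-1}. For fixed $x>0$ and $T\ge1$ we insert
\[
M(y)=\sum_{\rho:\left|\gamma\right|<T}\frac{y^{\rho}}{\zeta^{\prime}(\rho)\rho}+R^{*}(y,T)
\]
in both factors of $\int_0^x M(y)M(x-y)\,dy$. This gives three pieces:
\[
J_1^*=\int_0^x\Bigl(\sum_{|\gamma_1|<T}\cdots\Bigr)\Bigl(\sum_{|\gamma_2|<T}\cdots\Bigr)dy,\qquad J_2^*=2\int_0^x M(y)R^*(x-y,T)\,dy,\qquad J_3^*=-\int_0^x R^*(y,T)R^*(x-y,T)\,dy.
\]
For $J_1^*$ we integrate termwise. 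Since $\mathrm{Re}(\rho_j)=1/2>0$, the Beta integral applies:
\[
\int_0^x y^{\rho_1}(x-y)^{\rho_2}\,dy=x^{\rho_1+\rho_2+1}\frac{\Gamma(\rho_1+1)\Gamma(\rho_2+1)}{\Gamma(\rho_1+\rho_2+2)}.
\]
Dividing by $\rho_1\rho_2$ turns $\Gamma(\rho_j+1)$ into $\Gamma(\rho_j)$. So $J_1^*$ is exactly the truncated double series in the statement. Unlike the Liouville case, there is no $x^{1/2}$ term and hence no single series.

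For the error terms we use two bounds. First, under RH, $M(y)\ll y^{1/2+\varepsilon}+1$ for all $y>0$. Second, $R^*(y,T)\ll 1+y^{\varepsilon}+y(|\log y|+1)/T+(y+1)/T^{1-\varepsilon}$, using $(y-y^{\varepsilon})/\log y\ll y+1$.

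With these, the bounding of $J_{21},J_{22}$ and $J_3$ in Theorem \ref{thm:convol} carries over verbatim. The $T$-independent parts are:
\[
\int_0^x (y^{1/2+\varepsilon}+1)(1+(x-y)^{\varepsilon})\,dy\ll x^{3/2+2\varepsilon}+x^{1+\varepsilon}+x.
\]
The only new ingredient is the $x^{\varepsilon}$ in $R^*$, which contributes $\int_0^x(1+y^{\varepsilon})(1+(x-y)^{\varepsilon})\,dy\ll x+x^{1+2\varepsilon}$. After renaming $\varepsilon$, everything is $O_\varepsilon(x^{3/2+\varepsilon}+x^{\varepsilon})$ for $x\ge1$. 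For $0<x<1$ the integrals are $O(x)\ll x^{\varepsilon}$, which accounts for the term $x^\varepsilon$ in the statement.

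All $T$-dependent contributions carry a factor $T^{-1}\log x$ or $T^{-(1-\varepsilon)}$ times a finite integral. They handle $y$ near $0$ and $y$ near $x$ by the same $v|\log v|<1$ trick as before, so they vanish as $T\to\infty$ for fixed $x$.

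The step needing real care is letting $T\to\infty$ in $J_1^*$, which requires absolute convergence of the double series. We apply Theorem \ref{thm:double ser} with $f(\rho)=x^{\rho}/\zeta'(\rho)$ and $k=1>1/2$. Its hypothesis holds because $|x^\rho|=x^{1/2}$, and SZ gives $\sum_{0<\gamma<T}1/|\rho\zeta'(\rho)|\ll(\log T)^{3/2}=o(T^{\alpha})$ for every $\alpha>0$. The same bound holds for $\overline{\rho}$ since $\zeta'(\overline\rho)=\overline{\zeta'(\rho)}$. Absolute convergence then shows that the truncated square sums converge to the full double series, which completes the proof.
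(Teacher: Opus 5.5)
Your proposal is correct and follows the same approach as the paper. The paper's proof simply says to repeat the proof of Theorem~\ref{thm:convol}, using $M(x)\ll x^{1/2+\varepsilon}$ under RH, and to get absolute convergence of the double series from Theorem~\ref{thm:double ser}; you carry this out in detail, including the choice $k=1$ and the check of its hypothesis via $\sum_{0<\gamma<T}1/|\rho\zeta'(\rho)|\ll(\log T)^{3/2}$.
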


\begin{proof}
The proof is essentially the same as Theorem~\ref{thm:convol}, since $M(x)\ll x^{1/2+\varepsilon}$
under RH, and the absolute convergence of the series follows from
Theorem \ref{thm:double ser}.
\end{proof}

\section{the weighted averages}

In this section we want to apply the general result present in \cite{CGZ}, Proposition 2, for the function $S(n).$ We start recalling such main theorem.
\begin{thm}
\label{thm:MAINCGZ}Let $g_{1},g_{2}$ be arithmetical functions,
$\eta>0$ $f:\mathbb{R}\rightarrow\mathbb{C}$, and assume that:

1) $f$ has its support in $\left[a,b\right),\,a<b$, and $b\in\mathbb{R}\cup\left\{ +\infty\right\} $

2) $f\in C^{1}\left(a,b\right)$.

3) $f^{\prime}$ is absolutely continuous in $\left(a,b\right)$.

4) $f\left(a^{+}\right),f^{\prime}\left(a^{+}\right)$ exist and are
finite, and $f\left(b^{-}\right)=f^{\prime}\left(b^{-}\right)=0$.

Then, if 
\[
G_{j}\left(x\right):=\begin{cases}
\sum_{n\leq x}g_{j}\left(x\right), & x>0\\
0, & \text{otherwise}
\end{cases}
\]
for $j=1,2$, we get
\[
\sum_{\eta a<n\leq\eta b}\sum_{m\leq\eta b-n}g_{2}\left(m\right)g_{1}\left(n\right)f\left(\frac{m+n}{\eta}\right)=G_{2}\left(\eta a\right)\int_{a}^{b}G_{1}\left(\eta v-\eta a\right)f^{\prime}\left(v\right)dv
\]
\[
+\frac{1}{\eta}\int_{a}^{b}f^{\prime\prime}\left(w\right)\int_{\eta a}^{\eta w}G_{2}\left(s\right)G_{1}\left(\eta w-s\right)dsdw.
\]
\end{thm}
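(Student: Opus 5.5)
The plan is to reduce the left-hand side to a single Lebesgue integral against $f^{\prime}$ using $f(b^{-})=0$. Then rewrite the arithmetic inner sum as a Laplace convolution of $G_{1}$ and $G_{2}$ plus a boundary term. Finally, integrate by parts once more in the continuous variable, using $f^{\prime}(b^{-})=0$, to move the derivative onto $f^{\prime\prime}$. Throughout I read $G_{j}(x)=\sum_{n\leq x}g_{j}(n)$; the ``$g_{j}(x)$'' in the statement is a typo.

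\emph{Step 1.} Since $f$ vanishes for $w\geq b$, $f(b^{-})=0$, and $f$ is absolutely continuous on $(a,b)$, every admissible pair $(n,m)$ satisfies $(m+n)/\eta\in(a,b)$. Hence $f\left(\frac{m+n}{\eta}\right)=-\int_{(m+n)/\eta}^{b}f^{\prime}(v)\,dv$. Inserting this and exchanging the finite sum with the integral (or justifying the exchange by dominated convergence when $b=+\infty$) gives
\[
-\int_{a}^{b}f^{\prime}(v)\,H(\eta v)\,dv,\qquad H(u):=\sum_{\eta a<n\leq u}g_{1}(n)\,G_{2}(u-n).
\]

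\emph{Step 2.} I would write $H(u)$ as a Riemann--Stieltjes integral $\int_{(\eta a,u]}G_{2}(u-s)\,dG_{1}(s)$. I would then turn it into a sum of a boundary term involving $G_{2}(\eta a)$ and $G_{1}(u-\eta a)$ (or the symmetric arrangement after relabelling $m\leftrightarrow n$), together with the derivative in $u$ of the Laplace convolution $K(u):=\int_{\eta a}^{u}G_{2}(s)G_{1}(u-s)\,ds$. The cleanest route is to verify the identity directly. Both $K(u)$ and $\sum_{\eta a<n\leq u}g_{1}(n)\int\ldots$ are piecewise linear in $u$ between integers, so one can compare their right derivatives and their jumps at integer points. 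Concretely, $K^{\prime}(u)=\sum g_{2}(m)G_{1}(u-m)$ over $\eta a<m\leq u$, where the lower limit comes from differentiating under $s\leq u$. This step is where the signs and the precise form of the boundary term $G_{2}(\eta a)\int_{a}^{b}G_{1}(\eta v-\eta a)f^{\prime}(v)\,dv$ are fixed.

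\emph{Step 3.} With $-\int_{a}^{b}f^{\prime}(v)H(\eta v)\,dv$ split into the boundary piece and $-\frac{1}{\eta}\int_{a}^{b}f^{\prime}(v)\,\frac{d}{dv}K(\eta v)\,dv$, I integrate the latter by parts. This uses that $f^{\prime}$ is absolutely continuous and that $K(\eta v)$ is absolutely continuous, being an integral of locally bounded functions. The boundary terms vanish: at $v=a$ because $K(\eta a)=0$ (empty integration range), and at $v=b$ because $f^{\prime}(b^{-})=0$. What remains is $\frac{1}{\eta}\int_{a}^{b}f^{\prime\prime}(w)K(\eta w)\,dw$, which is the claimed main term.

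I expect the main obstacle to be Step 2, not any analysis. The delicate part is the bookkeeping of the lower cutoff $\eta a$: which variable carries the restriction $n>\eta a$, and how $G_{2}(\eta a)$ emerges as the correction when the convolution is started at $\eta a$ rather than at $0$. Jumps of $G_{1},G_{2}$ at integers must be handled with the right one-sided conventions. A secondary issue is the case $b=+\infty$. There I would require, as implicit hypotheses, enough decay of $f^{\prime},f^{\prime\prime}$ for the integrals against $G_{1}$, $G_{2}$ and $K$ to converge, so that the limits $v\to b^{-}$ in the integrations by parts are legitimate.
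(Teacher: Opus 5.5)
The paper gives no proof of this statement; it quotes Proposition~2 of \cite{CGZ}. Judged on its own, your plan has a genuine gap in Step~2. That is the only non-routine step, and you only sketch it. Steps~1 and~3 are fine, and your $H(u)=\sum_{\eta a<n\le u}g_1(n)G_2(u-n)$ is correct.

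The one concrete formula you give in Step~2 is wrong. Write $K(u)=\int_0^{u-\eta a}G_1(t)G_2(u-t)\,dt$ and differentiate. For almost every $u>\eta a$ this gives
\[
K^{\prime}(u)=G_2(\eta a)\,G_1(u-\eta a)+\sum_{\eta a<m\le u}g_2(m)\,G_1(u-m).
\]
The endpoint term you dropped is exactly where $G_2(\eta a)$ comes from. For instance, with $g_2(1)=1$, $g_2(n)=0$ for $n\ge2$ and $\eta a=1$, one has $K^{\prime}(u)=G_1(u-1)$, while your sum vanishes. So Step~2 would have to show $H(u)=\sum_{\eta a<m\le u}g_2(m)G_1(u-m)$ almost everywhere. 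This holds when $g_1=g_2$, and then your Steps~1--3 go through. It fails in general, because $H$ puts the cutoff $>\eta a$ on the variable of $g_1$, while the right-hand side puts it on the variable of $g_2$.

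In fact the statement as printed is false for $g_1\ne g_2$. Take $\eta=1$, $a=1$, $b=10$, $f(w)=(10-w)^2$ on $[1,10)$, and $g_1(1)=1$, $g_2(2)=1$, both zero elsewhere. The left side is $0$, because $n>1$ kills $g_1$. On the right, $G_2(1)=0$ and $\int_1^w G_2(s)G_1(w-s)\,ds=(w-3)_+$, so the right side is $\int_3^{10}2(w-3)\,dw=49$.

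The quickest way to see what is true is to expand
\[
\int_{\eta a}^{u}G_2(s)G_1(u-s)\,ds=\sum_{m,n\ge1}g_2(m)g_1(n)\bigl(u-n-\max(m,\eta a)\bigr)_+
\]
and use $\int_c^b f^{\prime\prime}(w)(w-c)\,dw=f(c)=-\int_c^b f^{\prime}(v)\,dv$ for $c>a$. The two terms on the right then combine to $\sum_{m>\eta a}\sum_{n\ge1}g_2(m)g_1(n)f\bigl(\frac{m+n}{\eta}\bigr)$. So the cutoff belongs on $m$; equivalently, $G_1$ and $G_2$ must be interchanged throughout the right-hand side.

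Your hedge ``or the symmetric arrangement after relabelling $m\leftrightarrow n$'' cannot rescue the printed version, since renaming dummy variables does not move the cutoff from $g_1$ to $g_2$. The slip is harmless in the paper, where $g_1=g_2=\lambda$ or $\mu$. Your caveat that $b=+\infty$ needs extra decay hypotheses is well taken.
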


So, taking $g_{1}(n)=g_{2}(n)=\lambda(n)$ and $G_{1}(x)=G_{2}(x)=L(x)$,
and assuming that $\eta a<1$, which implies that
\[
L\left(\eta a\right)=0,\,\int_{\eta a}^{\eta w}L\left(s\right)L\left(\eta w-s\right)ds=\left(L*L\right)\left(w\eta\right)
\]
the following result holds
\[
\sum_{n\leq\eta b}\sum_{m\leq\eta b-n}\lambda\left(n\right)\lambda\left(m\right)f\left(\frac{m+n}{\eta}\right)=\frac{\pi\eta}{8\zeta\left(\frac{1}{2}\right)^{2}}\int_{a}^{b}f^{\prime\prime}\left(w\right)w^{2}dw
\]
\[
+\frac{\sqrt{\pi}}{\zeta\left(\frac{1}{2}\right)}\sum_{\rho}\frac{\zeta\left(2\rho\right)\Gamma\left(\rho\right)\eta^{\rho+1/2}}{\zeta^{\prime}\left(\rho\right)\Gamma\left(\rho+\frac{5}{2}\right)}\int_{a}^{b}f^{\prime\prime}\left(w\right)w^{\rho+1/2+1}dw
\]
\[
+\sum_{\rho_{1}}\frac{\zeta\left(2\rho_{1}\right)}{\zeta^{\prime}\left(\rho_{1}\right)}\sum_{\rho_{2}}\frac{\zeta\left(2\rho_{2}\right)\eta^{\rho_{1}+\rho_{2}}}{\zeta^{\prime}\left(\rho_{2}\right)}\frac{\Gamma\left(\rho_{1}\right)\Gamma\left(\rho_{2}\right)}{\Gamma\left(\rho_{1}+\rho_{2}+2\right)}\int_{a}^{b}f^{\prime\prime}\left(w\right)w^{\rho_{1}+\rho_{2}+1}dw
\]
\[
+O_{\varepsilon}\left(\eta^{1/2+\varepsilon}\int_{a}^{b}w^{3/2+\varepsilon}\left|f^{\prime\prime}\left(w\right)\right|dw+\int_{a}^{b}w\left|f^{\prime\prime}\left(w\right)\right|dw\right)
\]
and integral and series can be switched by the absolute convergence
of the series. Now, if we fix $\eta>0$ and we take $f:\left[\frac{1}{\eta},+\infty\right)\rightarrow\mathbb{C},$
$f\left(w\right):=w^{-s}$, $\text{Re}(s)>2$, Now we are able to prove
what follows.
\begin{thm}
Assume RH and SZ. We have
\[
\sum_{n\geq1}\frac{S\left(n\right)}{n^{s}}=\frac{s\left(s+1\right)\pi}{8\zeta\left(\frac{1}{2}\right)^{2}\left(1-s\right)}+\frac{\sqrt{\pi}s\left(s+1\right)}{\zeta\left(\frac{1}{2}\right)}\sum_{\rho}\frac{\zeta\left(2\rho\right)\Gamma\left(\rho\right)}{\zeta^{\prime}\left(\rho\right)\Gamma\left(\rho+\frac{5}{2}\right)\left(\rho-s+\frac{1}{2}\right)}
\]
\[
+s\left(s+1\right)\sum_{\rho_{1}}\frac{\zeta\left(2\rho_{1}\right)}{\zeta^{\prime}\left(\rho_{1}\right)}\sum_{\rho_{2}}\frac{\zeta\left(2\rho_{2}\right)}{\zeta^{\prime}\left(\rho_{2}\right)}\frac{\Gamma\left(\rho_{1}\right)\Gamma\left(\rho_{2}\right)}{\Gamma\left(\rho_{1}+\rho_{2}+2\right)\left(\rho_{1}+\rho_{2}-s\right)}
\]
\[
+O\left(\frac{\left|s\left(s+1\right)\right|}{\text{Re}(s)-1/2-\varepsilon}\right)
\]
which is valid for $\text{Re}(s)>1$.
\end{thm}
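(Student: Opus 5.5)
We apply the weighted formula for $S(n)$ derived just before this statement (the specialization of Theorem~\ref{thm:MAINCGZ} with $g_1=g_2=\lambda$, fed by Theorem~\ref{thm:convol}) with the weight $f(w)=w^{-s}$ on $[a,+\infty)$. To get exactly the normalization in the statement we take $a$ slightly below $1/\eta$, say $a=1/\eta-\delta$ with $0<\delta<1/\eta$. Then $\eta a<1$ and no extra term $L(\eta a)\int\cdots$ appears. We let $\delta\to0^{+}$ at the end.

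\textbf{Checking the hypotheses.} Conditions 1)--4) hold for $\text{Re}(s)>0$, with $b=+\infty$ and the values at $b^{-}$ understood as limits. The left-hand side equals $\eta^{s}\sum_{N\geq2}S(N)N^{-s}$, since $N=m+n\geq2>\eta a$. This is absolutely convergent for $\text{Re}(s)>2$ because $|S(N)|\leq N$, and $S(1)=0$. Since $f''(w)=s(s+1)w^{-s-2}$, every integral on the right has the form
\[
s(s+1)\int_{a}^{\infty}w^{z-s-1}\,dw=\frac{s(s+1)\,a^{z-s}}{s-z},\qquad \text{Re}(s)>\text{Re}(z).
\]
Here the exponent $z$ is $2$, $\rho+\frac{3}{2}$ and $\rho_{1}+\rho_{2}+1$ respectively. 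Multiplying by the corresponding powers $\eta$, $\eta^{\rho+1/2}$, $\eta^{\rho_1+\rho_2}$ and letting $a\to1/\eta$, each term tends to $\eta^{s}$ times a rational factor in $s$ and the zeros. After dividing by $\eta^{s}$ we obtain the three main terms of the statement: the constant term $\frac{s(s+1)\pi}{8\zeta(1/2)^2}\cdot\frac{1}{s-1}$, the single sum with denominator $\rho-s+\frac12$, and the double sum with denominator $\rho_1+\rho_2-s$. The orientation of the sign in the first term should be double-checked against the written $\frac{1}{1-s}$.

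\textbf{Interchanges and the error term.} Summation over zeros and integration in $w$ may be swapped because $\int_a^\infty|f''(w)||w^{z}|\,dw\ll|s(s+1)|/(\text{Re}(s)-\text{Re}(z))$ uniformly in the zeros. Combined with the absolute convergence of the single series and of the double series (Theorem~\ref{thm:double ser}), this gives absolutely convergent series, which are also locally uniformly convergent in $s$. For the error term, the first piece gives
\[
\eta^{1/2+\varepsilon}|s(s+1)|\int_a^\infty w^{-1/2+\varepsilon-\sigma}\,dw\ll \frac{|s(s+1)|\,\eta^{\sigma}}{\sigma-\frac12-\varepsilon},\qquad \sigma=\text{Re}(s).
\]
The second piece gives $|s(s+1)|\eta^{\sigma}/\sigma$, which is absorbed into the first. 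Dividing by $\eta^{s}$ yields the stated $O$-term, with the $\delta\to0^{+}$ limit justified by dominated convergence.

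\textbf{Main obstacle: passing from $\text{Re}(s)>2$ to $\text{Re}(s)>1$.} The computation above is first valid for $\text{Re}(s)>2$, where the Dirichlet series converges absolutely. To reach the full range, we note that the $O$-term is really the explicit function $s(s+1)\int_a^\infty w^{-s-2}\mathcal{R}(\eta w)\,dw$. Here $\mathcal{R}(x)=O(x^{3/2+\varepsilon}+x)$ is the remainder in Theorem~\ref{thm:convol}, so this integral is holomorphic for $\text{Re}(s)>\frac12+\varepsilon$. The single and double series are holomorphic for $\text{Re}(s)>1$, and the first main term is meromorphic. So the right-hand side is analytic in $\text{Re}(s)>1$, with the only singularity coming from the first main term at $s=1$. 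The identity therefore persists there, with the left side read as the analytic continuation, and the $O$-bound holds in this region. The only real care needed is the uniformity in $s$ of the series bounds, which follows from the Stirling estimates already used in Theorem~\ref{thm:double ser}.
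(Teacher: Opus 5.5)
Your route is the paper's. You specialize the weighted formula (Theorem~\ref{thm:MAINCGZ} fed by Theorem~\ref{thm:convol}) to $f(w)=w^{-s}$, integrate the main terms of $C_\lambda$ against $f''$, bound the remainder, and pass from $\mathrm{Re}(s)>2$ to $\mathrm{Re}(s)>1$.

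Your choice $\eta a<1$ is a small improvement over the paper's $a=1/\eta$. With $a=1/\eta$, the left side of Theorem~\ref{thm:MAINCGZ} omits $n=1$, and the boundary term $-s\int_0^\infty L(g)(g+1)^{-s-1}\,dg=-\sum_m\lambda(m)(m+1)^{-s}$ is exactly minus that omitted part. The paper instead writes the full series on the left and absorbs this term into the $O$-term. Your explicit description of the $O$-term as a holomorphic Mellin integral of the remainder is also a sound rendering of the paper's brief continuation remark.

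The step that fails is your claim to recover \emph{the} three main terms of the statement.

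First, a bookkeeping slip. Since $f''(w)=s(s+1)w^{-s-2}$, the exponents in your formula $s(s+1)\int_a^\infty w^{z-s-1}\,dw$ must be $z=1$, $\rho+\frac12$, $\rho_1+\rho_2$, not $2$, $\rho+\frac32$, $\rho_1+\rho_2+1$. With your values you would get $\eta a^{2-s}\to\eta^{s-1}$ rather than $\eta^{s}$.

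Second, and more importantly, use the correct $z$ in your own formula $a^{z-s}/(s-z)$, let $a\to1/\eta$, and divide by $\eta^{s}$. This gives the denominators $s-1$, $s-\rho-\frac12$ and $s-\rho_1-\rho_2$, uniformly for all three terms. So the single and double sums do not come out with the printed denominators $\rho-s+\frac12$ and $\rho_1+\rho_2-s$. Your suspicion about the first term is right, but the same sign reversal hits every main term.

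The printed statement, and the paper's proof, evaluate $\int_1^\infty h^{z-s-1}\,dh$ as $1/(z-s)$. For $\mathrm{Re}(s)>\mathrm{Re}(z)$ it actually equals $1/(s-z)$. Two checks confirm this:
\begin{itemize}
\item The $x^2$ term of $C_\lambda(x)$ encodes a mean value $c=\pi/(4\zeta(\frac12)^2)>0$ for $S(n)$. That mean value contributes $c\,\zeta(s)\sim c/(s-1)$, not $c/(1-s)$.
\item Formally, take residues of $\Gamma(s)y^{-s}$ times the corrected right-hand side at $s=1$, $s=\rho+\frac12$ and $s=\rho_1+\rho_2$. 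This reproduces exactly the three main terms of the paper's own formula for $\sum_n S(n)e^{-ny}$, with their printed plus signs. The printed Dirichlet-series version would give all three with minus signs.
\end{itemize}

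So what your argument proves, and what you should state, is the identity with all three main terms negated, i.e.\ with denominators $(s-1)$, $(s-\rho-\frac12)$ and $(s-\rho_1-\rho_2)$. Do not assert agreement with the print for two of them. The $O$-term, the absolute convergence of the series, and the continuation to $\mathrm{Re}(s)>1$ are unaffected.
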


%\footnote{Si pu\`o calcolare il valore degli ultimi due integrali}

\begin{proof}
We fix $\eta$ and we take $f:\left[\frac{1}{\eta},+\infty\right)\rightarrow\mathbb{C}$,
defined as $f\left(w\right):=w^{-s},\,\text{Re}\left(s\right)>2$.
In this case, we have, from Theorem \ref{thm:MAINCGZ} and from the
trivial evaluation $L(1)=1$, that
\[
\eta^{s}\sum_{n\geq1}\sum_{m\geq1}\frac{\lambda\left(n\right)\lambda\left(m\right)}{\left(m+n\right)^{s}}=\eta^{s}\sum_{n\geq1}\frac{S\left(n\right)}{n^{s}}=-s\int_{1/\eta}^{+\infty}L\left(\eta v-1\right)v^{-s-1}dv
\]
\[
+\frac{s\left(s+1\right)}{\eta}\int_{1/\eta}^{+\infty}w^{-s-2}\int_{1}^{\eta w}L\left(s\right)L\left(\eta w-s\right)dsdw.
\]
Now, if we make the change of variable $\eta v-1=g$ in the first
integral and $\eta w=h$ in the second, we get
\[
-s\int_{1/\eta}^{+\infty}L\left(\eta v-1\right)v^{-s-1}dv=-s\eta^{s}\int_{0}^{+\infty}\frac{L(g)}{\left(g+1\right)^{s+1}}dg
\]
and
\[
\frac{s\left(s+1\right)}{\eta}\int_{1/\eta}^{+\infty}w^{-s-2}\int_{1}^{\eta w}L\left(s\right)L\left(\eta w-s\right)dsdw
\]
\[
=s\left(s+1\right)\eta^{s}\int_{1}^{+\infty}w^{-s-2}\int_{1}^{h}L\left(s\right)L\left(h-s\right)dsdh
\]
\[
=s\left(s+1\right)\eta^{s}\int_{1}^{+\infty}w^{-s-2}\left(L*L\right)\left(h\right)dh.
\]
Hence, canceling out $\eta^{s}$, inserting the explicit formula of $\left(L*L\right)$
and making trivial computations, we get, for every $\varepsilon>0$,
that
\[
s\left(s+1\right)\int_{1}^{+\infty}w^{-s-2}\left(L*L\right)\left(h\right)dh=\frac{s\left(s+1\right)\pi}{8\zeta\left(\frac{1}{2}\right)^{2}\left(1-s\right)}
\]
\[
+\frac{\sqrt{\pi}s\left(s+1\right)}{\zeta\left(\frac{1}{2}\right)}\sum_{\rho}\frac{\zeta\left(2\rho\right)\Gamma\left(\rho\right)}{\zeta^{\prime}\left(\rho\right)\Gamma\left(\rho+\frac{5}{2}\right)\left(\rho-s+\frac{1}{2}\right)}
\]
\[
+s\left(s+1\right)\sum_{\rho_{1}}\frac{\zeta\left(2\rho_{1}\right)}{\zeta^{\prime}\left(\rho_{1}\right)}\sum_{\rho_{2}}\frac{\zeta\left(2\rho_{2}\right)}{\zeta^{\prime}\left(\rho_{2}\right)}\frac{\Gamma\left(\rho_{1}\right)\Gamma\left(\rho_{2}\right)}{\Gamma\left(\rho_{1}+\rho_{2}+2\right)\left(\rho_{1}+\rho_{2}-s\right)}
\]
\[
+O\left(\left|s\left(s+1\right)\right|\left[\int_{1}^{+\infty}w^{-\text{Re}(s)-1/2+\varepsilon}dw+\int_{1}^{+\infty}w^{-\text{Re}(s)-2}dw\right]\right).
\]
Clearly, if $\text{Re}(s)>1$, all the integrals are convergent, the main terms of the formula are well defined and the series are absolutely convergent.
To conclude it is enough to observe that, for every $\varepsilon>0$,
we get
\[
-s\int_{0}^{+\infty}\frac{L(g)}{\left(g+1\right)^{s+1}}dg\ll\left|s\right|\int_{0}^{+\infty}\frac{1}{\left(g+1\right)^{\text{Re}(s)+1/2-\varepsilon}}dg
\]
which can be included in the error term since $\left|s+1\right|>1.$
\end{proof}
Note that the previous corollary provides an analytic continuation
of the Dirichlet series to the half-plane $\text{Re}(s)>1.$ We cannot
expect to overcome the line $\text{Re}\left(s\right)=1$ since it
is well-known that, under RH and under the hypothesis that the imaginary
parts of the non-trivial zeros of $\zeta(s)$ are linearly independent
over the rationals, then the set
\[
\mathcal{K}:=\left\{ k\in\mathbb{R}:\,k=\gamma_{1}+\gamma_{2},\,\gamma_{1},\gamma_{2}\text{ are imaginary parts of the non-trivial zeros of }\zeta\left(s\right)\right\} 
\]
is dense in $\mathbb{R}$ (see \cite{EM} for the Goldbach case, which
presents the same situation). 

In a similar fashion, fixing two parameters $\eta,\,y>0$, taking
$f:\mathbb{R}_{0}^{+}\rightarrow\mathbb{C},$ $f\left(w\right):=e^{-w\eta y},$
we get the following explicit formula
\begin{align*}
\sum_{n\geq1}\sum_{m\geq1}\lambda\left(n\right)\lambda\left(m\right)f\left(\frac{m+n}{\eta}\right) & =\sum_{n\geq1}S\left(n\right)e^{-ny}\\
 & =\frac{\pi}{4\zeta\left(\frac{1}{2}\right)^{2}y}+\frac{\sqrt{\pi}}{\zeta\left(\frac{1}{2}\right)}\sum_{\rho}\frac{\zeta\left(2\rho\right)}{\zeta^{\prime}\left(\rho\right)}\Gamma\left(\rho\right)y^{-\rho-1/2}\\
 & +\sum_{\rho_{1}}\frac{\zeta\left(2\rho_{1}\right)}{\zeta^{\prime}\left(\rho_{1}\right)}\Gamma\left(\rho_{1}\right)y^{-\rho_{1}}\sum_{\rho_{2}}\frac{\zeta\left(2\rho_{2}\right)\Gamma\left(\rho_{2}\right)}{\zeta^{\prime}\left(\rho_{2}\right)}y^{-\rho_{2}}\\
 & +O_{\varepsilon}\left(y^{-1/2-\varepsilon}+1\right).
\end{align*}

Again, an identical argument produces similar formulas in the
case we work with $\mu(n)$. For example, using the notations and
the hypotheses of Theorem \ref{thm:MAINCGZ} and assuming that $a\eta<1$,
we have, for every $\varepsilon>0$, that
\[
\sum_{n\leq\eta b}\sum_{m\leq\eta b-n}\mu\left(n\right)\mu\left(m\right)f\left(\frac{m+n}{\eta}\right)
\]
\[
=\sum_{\rho_{1}}\sum_{\rho_{2}}\frac{\eta^{\rho_{1}+\rho_{2}}}{\zeta^{\prime}\left(\rho_{1}\right)\zeta^{\prime}\left(\rho_{2}\right)}\frac{\Gamma\left(\rho_{1}\right)\Gamma\left(\rho_{2}\right)}{\Gamma\left(\rho_{1}+\rho_{2}+2\right)}\int_{0}^{b}f^{\prime\prime}\left(w\right)w^{\rho_{1}+\rho_{2}+1}dw
\]

\[
+O_{\varepsilon}\left(\eta^{1/2+\varepsilon}\int_{0}^{b}w^{3/2+\varepsilon}\left|f^{\prime\prime}\left(w\right)\right|dw+\int_{0}^{b}w\left|f^{\prime\prime}\left(w\right)\right|dw\right).
\]
Furthermore, we can get the following results.
\begin{cor}
Assume RH and SZ. We have, for every $\varepsilon>0$, that
\[
\sum_{n\geq1}\frac{S^{*}\left(n\right)}{n^{s}}=s\left(s+1\right)\sum_{\rho_{1}}\frac{1}{\zeta^{\prime}\left(\rho_{1}\right)}\sum_{\rho_{2}}\frac{1}{\zeta^{\prime}\left(\rho_{2}\right)}\frac{\Gamma\left(\rho_{1}\right)\Gamma\left(\rho_{2}\right)}{\Gamma\left(\rho_{1}+\rho_{2}+2\right)\left(\rho_{1}+\rho_{2}-s\right)}
\]
\[
+O\left(\frac{\left|s\left(s+1\right)\right|}{\text{Re}(s)-1/2-\varepsilon}\right)
\]
which is valid for $\text{Re}(s)>1$.
\end{cor}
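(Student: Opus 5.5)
The plan follows the Liouville proof step by step, with $M$ in place of $L$ and $C_\mu$ in place of $C_\lambda$.

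\emph{Step 1: reduce to $C_\mu$.} Fix $\eta>0$ and take $f(w):=w^{-s}$ on $[1/\eta,+\infty)$, first with $\mathrm{Re}(s)>2$. This choice satisfies hypotheses 1)--4) of Theorem \ref{thm:MAINCGZ}, since $f(b^-)=f'(b^-)=0$ at $b=+\infty$. Apply that theorem with $g_1=g_2=\mu$ and $G_1=G_2=M$, with $a\eta=1$, so that the extra boundary term appears with $M(1)=1$. This gives
\[
\eta^{s}\sum_{n\geq1}\frac{S^{*}(n)}{n^{s}}=-s\int_{1/\eta}^{+\infty}M(\eta v-1)v^{-s-1}dv+\frac{s(s+1)}{\eta}\int_{1/\eta}^{+\infty}w^{-s-2}\int_{1}^{\eta w}M(u)M(\eta w-u)\,du\,dw .
\]
The substitutions $g=\eta v-1$ and $h=\eta w$ factor out $\eta^{s}$. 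The first term becomes $-s\int_0^\infty M(g)(g+1)^{-s-1}dg$. Since $M(g)\ll g^{1/2+\varepsilon}+1$ under RH, this term is $\ll |s|/(\mathrm{Re}(s)-1/2-\varepsilon)$, and it goes into the error because $|s+1|>1$. The inner integral over $[1,h]$ differs from $C_\mu(h)=\int_0^h M(u)M(h-u)du$ only on $[0,1]$, where $M=0$. So the second term is $s(s+1)\int_1^\infty h^{-s-2}C_\mu(h)\,dh$.

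\emph{Step 2: insert the explicit formula for $C_\mu$.} Substitute Theorem \ref{thm:convol-1} for $C_\mu(h)$. In the main term, interchange $\int_1^\infty$ with the double sum over zeros, which is justified by the absolute convergence from Theorem \ref{thm:double ser}. Each zero pair then contributes
\[
\int_1^\infty h^{\rho_1+\rho_2+1-s-2}\,dh=\frac{1}{s-\rho_1-\rho_2},
\]
valid because $\mathrm{Re}(\rho_1+\rho_2-s)=1-\mathrm{Re}(s)<0$. Up to the sign convention for $\rho_1+\rho_2-s$, which must be matched with the Liouville case, this produces the stated double series. The error term $O(h^{3/2+\varepsilon}+h^{\varepsilon})$ contributes $\ll|s(s+1)|\int_1^\infty (h^{-\mathrm{Re}(s)-1/2+\varepsilon}+h^{-\mathrm{Re}(s)-2+\varepsilon})\,dh$, which is $\ll |s(s+1)|/(\mathrm{Re}(s)-1/2-\varepsilon)$.

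\emph{Step 3: extend from $\mathrm{Re}(s)>2$ to $\mathrm{Re}(s)>1$.} The right-hand side is analytic for $\mathrm{Re}(s)>1$. The double series converges absolutely and locally uniformly there, because $|\rho_1+\rho_2-s|\geq \mathrm{Re}(s)-1>0$ and the remaining factor is summable by Theorem \ref{thm:double ser}. The error terms are given by integrals that converge for $\mathrm{Re}(s)>1/2+\varepsilon$.

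The main obstacle is Step 3, because the $O$-term is not itself analytic. To get around this, I would keep it as the explicit integral $s(s+1)\int_1^\infty h^{-s-2}E(h)\,dh$, where $E:=C_\mu-\text{(double series)}$, which is analytic for $\mathrm{Re}(s)>1/2+\varepsilon$. The identity then extends by uniqueness of analytic continuation, and only afterwards is this integral bounded in modulus to recover the stated $O$-term.
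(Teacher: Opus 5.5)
Your route is the paper's own. The paper proves the M\"obius corollary by repeating the proof of the Liouville Dirichlet-series theorem: Theorem~\ref{thm:MAINCGZ} with $f(w)=w^{-s}$ and $a=1/\eta$, the substitutions $g=\eta v-1$ and $h=\eta w$, insertion of Theorem~\ref{thm:convol-1} with termwise integration justified by absolute convergence, and $M(g)\ll g^{1/2+\varepsilon}+1$ for the boundary term. Your Step~3 makes rigorous the passage from $\mathrm{Re}(s)>2$ to $\mathrm{Re}(s)>1$, which the paper only asserts.

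\textbf{The sign.} Your evaluation $\int_1^\infty h^{\rho_1+\rho_2-s-1}\,dh=1/(s-\rho_1-\rho_2)$ is correct, and no ``convention'' turns it into $1/(\rho_1+\rho_2-s)$. Matching with the Liouville case does not help, because the paper's Liouville theorem has the same slip. There the term $\frac{1}{1-s}$ should be $\frac{1}{s-1}$, since $\int_1^\infty h^{-s}\,dh=\frac{1}{s-1}$; this is also the sign consistent with the positive leading term $\frac{\pi}{4\zeta(1/2)^2y}$ of the paper's power-series formula.

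The slip is not absorbed by the $O$-term uniformly in $s$. For $\mathrm{Re}(s)\ge 1+\delta$ the whole double series is $O_\delta\bigl(|s(s+1)|/(\mathrm{Re}(s)-1/2-\varepsilon)\bigr)$, so there either sign is harmless. Near the line it is not: write $c(\rho_1,\rho_2)$ for the coefficient $\Gamma(\rho_1)\Gamma(\rho_2)/(\zeta'(\rho_1)\zeta'(\rho_2)\Gamma(\rho_1+\rho_2+2))$. By dominated convergence (using Theorem~\ref{thm:double ser}), $\delta\sum_{\rho_1}\sum_{\rho_2}c(\rho_1,\rho_2)/(1+\delta-\rho_1-\rho_2)\to\sum_\rho|\Gamma(\rho)|^2/(2|\zeta'(\rho)|^2)>0$ as $\delta\to0^+$, since only the diagonal pairs $\rho_2=\overline{\rho_1}$ survive. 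So reversing the sign changes the right-hand side by $\gg 1/\delta$ at $s=1+\delta$, while the error term stays bounded there. You should state the result with $1/(s-\rho_1-\rho_2)$, equivalently with $-s(s+1)$ in front of the printed series, and say explicitly that this corrects the statement.

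\textbf{A smaller point, also inherited from the paper.} With $a\eta=1$, the left side of Theorem~\ref{thm:MAINCGZ} runs over $n>1$, so it is not the full Dirichlet series. The missing $n=1$ terms equal $s\eta^{s}\int_0^\infty M(g)(g+1)^{-s-1}\,dg$, which cancels your boundary term exactly. Hence in fact $\sum_{n\ge1}S^*(n)n^{-s}=s(s+1)\int_1^\infty h^{-s-2}C_\mu(h)\,dh$. Since you put the boundary term into the error anyway, your conclusion is unaffected.
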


\begin{cor}
Assume RH and SZ. For $y>0$ and for every $\varepsilon>0$ we have
\[
\sum_{n\geq1}S^{*}\left(n\right)e^{-ny}=\sum_{\rho_{1}}\frac{\Gamma\left(\rho_{1}\right)}{\zeta^{\prime}\left(\rho_{1}\right)}y^{-\rho_{1}}\sum_{\rho_{2}}\frac{\Gamma\left(\rho_{2}\right)}{\zeta^{\prime}\left(\rho_{2}\right)}y^{-\rho_{2}}+O_{\varepsilon}\left(y^{-1/2-\varepsilon}+1\right).
\]
\end{cor}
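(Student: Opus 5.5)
We apply Theorem \ref{thm:MAINCGZ} with the weight $f(w):=e^{-w\eta y}$, in exact parallel with the $\lambda$ case. The statement is the $\mu$-analogue of the power-series formula for $S(n)$, so we take $g_{1}=g_{2}=\mu$ and $G_{1}=G_{2}=M$. Since $f$ is supported on $[0,+\infty)$, we have $a=0$, so $\eta a<1$. Then $M(\eta a)=0$, and the boundary term of Theorem \ref{thm:MAINCGZ} vanishes. The hypotheses on $f$ are immediate: $f$ is smooth, $f(0^{+})$ and $f'(0^{+})$ are finite, and $f$ and $f'$ tend to $0$ at $b=+\infty$. The left-hand side becomes $\sum_{n\geq1}S^{*}(n)e^{-ny}$, and the right-hand side is
\[
\frac{1}{\eta}\int_{0}^{\infty}f''(w)\,C_{\mu}(\eta w)\,dw=\eta^{2}y^{2}\int_{0}^{\infty}e^{-w\eta y}\,C_{\mu}(\eta w)\,dw,
\]
where $C_{\mu}(x)=\int_{0}^{x}M(s)M(x-s)\,ds$.

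Next we insert the explicit formula of Theorem \ref{thm:convol-1} for $C_{\mu}(\eta w)$. For the main term we interchange the integral with the double series over zeros, and the interchange needs justification. Each term $x^{\rho_{1}+\rho_{2}+1}$ has real part $\mathrm{Re}(\rho_{1}+\rho_{2}+1)=2$ under RH, so its Laplace integral converges. After integrating, the coefficients are exactly the absolutely summable weights of Theorem \ref{thm:double ser} with $k=1$ and $f(\rho)=1/\zeta'(\rho)$. The SZ bound $\sum_{0<\gamma<T}1/|\rho\zeta'(\rho)|\ll(\log T)^{3/2}=o(T^{\alpha})$ verifies the hypothesis of that theorem. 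Therefore $\int\sum\sum|\cdot|<\infty$, and Fubini applies. Each term is computed by
\[
\eta^{2}y^{2}\int_{0}^{\infty}e^{-w\eta y}(\eta w)^{\rho_{1}+\rho_{2}+1}\,dw=\Gamma(\rho_{1}+\rho_{2}+2)\,y^{-\rho_{1}-\rho_{2}},
\]
which cancels the denominator $\Gamma(\rho_{1}+\rho_{2}+2)$. What remains is $\sum_{\rho_{1}}\sum_{\rho_{2}}\Gamma(\rho_{1})\Gamma(\rho_{2})y^{-\rho_{1}-\rho_{2}}/(\zeta'(\rho_{1})\zeta'(\rho_{2}))$.

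This remaining expression factors as the product of two single series, provided those series converge. The single series $\sum_{\rho}\Gamma(\rho)y^{-\rho}/\zeta'(\rho)$ converges absolutely. Indeed, Stirling gives $|\Gamma(\rho)|\ll e^{-\pi|\gamma|/2}$, and against this exponential decay the factor $|y^{-\rho}|=y^{-1/2}e^{\gamma\arg y}$ is harmless because $y>0$. Combined with the SZ bound on $\sum 1/|\zeta'(\rho)|$, this gives absolute convergence, so the double series is the product of two absolutely convergent series.

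For the error term, write $O_{\varepsilon}(x^{3/2+\varepsilon}+x^{\varepsilon})$ with $x=\eta w$. Then
\[
\eta^{2}y^{2}\int_{0}^{\infty}e^{-w\eta y}\bigl((\eta w)^{3/2+\varepsilon}+(\eta w)^{\varepsilon}\bigr)dw\ll_{\varepsilon}y^{-1/2-\varepsilon}+y^{1-\varepsilon}.
\]
Here $\eta$ disappears, as it must. The term $y^{1-\varepsilon}$ is $O(1)$ for bounded $y$, which gives the claimed error $O_{\varepsilon}(y^{-1/2-\varepsilon}+1)$.

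One point needs care. For $y$ large, $y^{1-\varepsilon}$ is not $O(1)$, although in that regime the left side is trivially small. So we either restrict to $0<y\leq1$, where the bound is clean, or note that for $y\geq1$ the left side and the zero series are both $O(1)$ directly. Apart from this range issue, the only nonroutine step is the Fubini justification via Theorem \ref{thm:double ser}.
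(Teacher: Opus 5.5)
Your proposal is correct and follows the paper's route: Theorem \ref{thm:MAINCGZ} with $f(w)=e^{-w\eta y}$ and $a=0$, $b=+\infty$, then the explicit formula of Theorem \ref{thm:convol-1} integrated termwise; the paper only says the argument is identical to the $\lambda$ case, and your Fubini justification via Theorem \ref{thm:double ser} with $k=1$ and your treatment of the $y^{1-\varepsilon}$ term for $y\geq1$ supply details it leaves implicit. One small slip: $\frac{1}{\eta}f''(w)=\eta y^{2}e^{-w\eta y}$, not $\eta^{2}y^{2}e^{-w\eta y}$, but your later computations (which give $\Gamma(\rho_{1}+\rho_{2}+2)y^{-\rho_{1}-\rho_{2}}$ and an $\eta$-free error) already use the correct factor.
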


\section{liouville/m\"obius additive problem with an arbitrary number of
summands}

From the theory it is quite simple to generalize the previous results
to an additive problem with Liouville function with an arbitrary number
of summands. More precisely, let $d\geq2$ and consider
\[
S_{d}\left(n\right):=\sum_{m_{1}+\dots+m_{d}=n}\lambda\left(m_{1}\right)\cdots\lambda\left(m_{d}\right).
\]
From \cite{CGZ} we know that Theorem \ref{thm:MAINCGZ} can be generalized
in the following way
\[
\sum_{n_{1}\leq\eta b}\sum_{n_{2}\leq\eta b-n_{1}}\cdots\sum_{n_{d}\leq\eta b-n_{1}-\dots-n_{d-1}}g_{1}\left(n_{1}\right)\cdots g_{d}\left(n_{d}\right)f\left(\frac{n_{1}+\dots+n_{d}}{\eta}\right)
\]
\[
=\frac{1}{\eta}\int_{0}^{b}f^{(d)}\left(w\right)\left(G_{1}*\dots*G_{d}\right)\left(\eta w\right)dw
\]
and letting
\[
\mathcal{G}_{d}(n):=\sum_{m_{1}+\dots+m_{d}=n}g_{1}\left(m_{1}\right)\cdots g_{d}\left(m_{d}\right)
\]
we have
\[
\left(G_{1}*\dots*G_{d}\right)\left(x\right)=\frac{1}{\left(d-1\right)!}\sum_{n\leq x}\mathcal{G}_{d}\left(n\right)\left(x-n\right)^{d-1}=\frac{1}{\left(d-2\right)!}\int_{0}^{x}\sum_{n\leq y}\mathcal{G}_{d-1}\left(n\right)\left(y-n\right)^{d-2}dy
\]
\[
=\frac{1}{\left(d-3\right)!}\int_{0}^{x}\int_{0}^{y}\sum_{n\leq z}\mathcal{G}_{d-2}\left(n\right)\left(z-n\right)^{d-3}dzdy=\dots
\]
that is, we just need to integrate the Ces\`aro average of order 1
$d-2$ times (see again \cite{CGZ}). So we have the following theorem.
\begin{thm}
For every fixed natural number $d\geq2$ we have
\[
\left(L*\dots*L\right)\left(x\right)=\frac{1}{\left(d-1\right)!}\sum_{n\leq x}S_{d}\left(n\right)\left(x-n\right)^{d-1}
\]
\[
=\frac{x^{d}\pi}{4\zeta\left(\frac{1}{2}\right)^{2}d!}+\frac{\sqrt{\pi}}{\zeta\left(\frac{1}{2}\right)}\sum_{\rho}\frac{\zeta\left(2\rho\right)\Gamma\left(\rho\right)x^{\rho+d-1/2}}{\zeta^{\prime}\left(\rho\right)\Gamma\left(\rho+d+\frac{1}{2}\right)}
\]
\[
+\sum_{\rho_{1}}\frac{\zeta\left(2\rho_{1}\right)}{\zeta^{\prime}\left(\rho_{1}\right)}\sum_{\rho_{2}}\frac{\zeta\left(2\rho_{2}\right)x^{\rho_{1}+\rho_{2}+d-1}}{\zeta^{\prime}\left(\rho_{2}\right)}\frac{\Gamma\left(\rho_{1}\right)\Gamma\left(\rho_{2}\right)}{\Gamma\left(\rho_{1}+\rho_{2}+d\right)}+O_{\varepsilon}\left(x^{d-1/2+\varepsilon}+x^{d-1}\right).
\]
\end{thm}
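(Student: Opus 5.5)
Since $(L*\dots*L)$ with $d$ factors is obtained from the $2$-fold convolution $C_\lambda=L*L$ by integrating $d-2$ times from $0$ (the iterated-integral identity recalled just before the statement), I will induct on $d$, taking Theorem \ref{thm:convol} as the base case $d=2$. Write the $2$-fold formula as $C_\lambda(x)=M(x)+E(x)$, with $M$ the three main terms and $E(x)\ll_\varepsilon x^{3/2+\varepsilon}+x$ valid for all $x>0$. Applying $d-2$ times the operator $F\mapsto\int_0^x F(y)\,dy$ gives
\[
(L*\dots*L)(x)=\frac{1}{(d-3)!}\int_0^x (x-y)^{d-3}\,C_\lambda(y)\,dy
\]
by the Cauchy formula for repeated integration (the case $d=2$ being trivial). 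So I only need to integrate each term of $M$ against the kernel $(x-y)^{d-3}/(d-3)!$.

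For a single power I use the Beta integral
\[
\frac{1}{(d-3)!}\int_0^x (x-y)^{d-3}y^{\alpha}\,dy=\frac{\Gamma(\alpha+1)}{\Gamma(\alpha+d-1)}\,x^{\alpha+d-2},\qquad \operatorname{Re}\alpha>-1 .
\]
For the first main term, $\alpha=2$: the factor $\Gamma(3)/\Gamma(d+1)=2/d!$ turns $\pi/(8\zeta(1/2)^2)$ into $\pi x^d/(4\zeta(1/2)^2 d!)$. For the single series, $\alpha=\rho+3/2$: the ratio $\Gamma(\rho+5/2)/\Gamma(\rho+d+1/2)$ cancels the $\Gamma(\rho+5/2)$ in the denominator and yields $x^{\rho+d-1/2}/\Gamma(\rho+d+1/2)$. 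For the double series, $\alpha=\rho_1+\rho_2+1$: the factor $\Gamma(\rho_1+\rho_2+2)/\Gamma(\rho_1+\rho_2+d)$ gives exactly the stated term. In every case $\operatorname{Re}\alpha\ge 2>-1$, so the Beta integral applies. The error term integrates to $\ll_\varepsilon x^{d-1/2+\varepsilon}+x^{d-1}$ by the same Beta integral with $\alpha=3/2+\varepsilon$ and $\alpha=1$, uniformly for all $x>0$.

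The main obstacle is justifying termwise integration of the series, especially the double series, which is only absolutely convergent (not uniformly in $x$ in any trivial way). On $[0,x]$ we have $|y^{\rho}|=y^{1/2}\le x^{1/2}$ under RH, so each series is dominated termwise by the absolutely convergent numerical series from Theorem \ref{thm:convol} times a power of $x$. For the double series this is Theorem \ref{thm:double ser} with $k=1$ and $f(\rho)=\zeta(2\rho)/\zeta'(\rho)$, whose hypothesis follows from the SZ bound $\sum_{0<\gamma<T}|\zeta(2\rho)/(\rho\zeta'(\rho))|\ll(\log T)^{3/2}\log\log T$. Fubini/dominated convergence then allows interchanging the sums with $\int_0^x$. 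Absolute convergence of the resulting series is automatic, since the new factors $\Gamma(\alpha+1)/\Gamma(\alpha+d-1)$ only add decay (equivalently, apply Theorem \ref{thm:double ser} with $k=d-1>1/2$). Alternatively, one could avoid the limit question by integrating the truncated identity at height $T$ from the proof of Theorem \ref{thm:convol} and letting $T\to\infty$, since the error integrals there tend to $0$ uniformly on $[0,x]$; I would use the dominated-convergence route first.
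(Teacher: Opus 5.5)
Your bookkeeping is correct: Cauchy's repeated-integration formula, the Beta integrals with $\alpha=2$, $\rho+3/2$, $\rho_1+\rho_2+1$, the error term, and the dominated-convergence justification via Theorem~\ref{thm:double ser} all check out. This is also exactly the paper's route, since it too just integrates Theorem~\ref{thm:convol} $d-2$ times. The problem is the very first step, which is false for $d\ge 3$.

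Integrating from $0$ is convolution with $\mathbf{1}_{[0,\infty)}$, not with $L$. So what you actually compute is
\[
\frac{1}{(d-3)!}\int_0^x (x-y)^{d-3}C_\lambda(y)\,dy=\frac{1}{(d-1)!}\sum_{n\le x}S(n)(x-n)^{d-1},
\]
with $S=S_2$. The $d$-fold convolution, by contrast, is $\frac{1}{(d-1)!}\sum_{n\le x}S_d(n)(x-n)^{d-1}$. The identity displayed before the statement, on which you rely, drops the factor $G_d(x-y)$. The correct recursion is $(G_1*\dots*G_d)(x)=\frac{1}{(d-2)!}\int_0^x\sum_{n\le y}\mathcal{G}_{d-1}(n)(y-n)^{d-2}G_d(x-y)\,dy$. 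Already for $d=3$ and $2<x<3$ one has $(L*L*L)(x)=0$, because each factor vanishes on $[0,1)$, while $\int_0^x C_\lambda(y)\,dy=\frac12(x-2)^2$.

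Nor is the discrepancy absorbed by the error term. Put $A(y):=\sum_{n\ge1}\lambda(n)e^{-ny}$. The Laplace transform of $L*\dots*L$ ($d$ factors) is $y^{-d}A(y)^d$, while that of your iterated integral is $y^{-d}A(y)^2$.

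Suppose the stated formula held. Under RH, every main term on the right is $\ll x^d$ by absolute convergence, and the error is $\ll x^d+x^{d-1}$. Hence $(L*\dots*L)(x)\ll x^d+x^{d-1}$ for all $x>0$, which gives $|A(y)|^d\ll y^{-1}$ for $0<y<1$. Then $\int_0^\infty A(y)y^{s-1}\,dy=\Gamma(s)\zeta(2s)/\zeta(s)$ would be holomorphic in $\mathrm{Re}(s)>1/d$. This contradicts the pole at $s=1/2$ once $d\ge3$.

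So your argument, like the paper's, proves the displayed right-hand side as an expansion of the Ces\`aro mean $\frac{1}{(d-1)!}\sum_{n\le x}S(n)(x-n)^{d-1}$ of $S=S_2$, not of $L*\dots*L$. For the latter you would have to insert Corollary~\ref{cor:IMP} into all $d$ factors. That produces the leading term $\frac{\Gamma(3/2)^d}{\zeta(1/2)^d\Gamma(3d/2)}x^{3d/2-1}$ and $j$-fold sums over zeros for $1\le j\le d$. Their convergence needs a $d$-fold analogue of Theorem~\ref{thm:double ser}.
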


We can conclude that if $f$ is a function like in the Corollary 4
of \cite{CGZ}, then we get
\[
\sum_{n_{1}\leq\eta b}\sum_{n_{2}\leq\eta b-n_{1}}\cdots\sum_{n_{d}\leq\eta b-n_{1}-\dots-n_{d-1}}\lambda\left(n_{1}\right)\cdots\lambda\left(n_{d}\right)f\left(\frac{n_{1}+\dots+n_{d}}{\eta}\right)
\]
\[
=\frac{\pi\eta^{d-1}}{4\zeta\left(\frac{1}{2}\right)^{2}d!}\int_{0}^{b}f^{\prime\prime}\left(w\right)w^{d}dw+\frac{\sqrt{\pi}}{\zeta\left(\frac{1}{2}\right)}\sum_{\rho}\frac{\zeta\left(2\rho\right)\Gamma\left(\rho\right)\eta^{\rho+d-3/2}}{\zeta^{\prime}\left(\rho\right)\Gamma\left(\rho+d+\frac{1}{2}\right)}\int_{0}^{b}f^{\prime\prime}\left(w\right)w^{\rho+d-1/2}dw
\]
\[
+\sum_{\rho_{1}}\frac{\zeta\left(2\rho_{1}\right)}{\zeta^{\prime}\left(\rho_{1}\right)}\sum_{\rho_{2}}\frac{\zeta\left(2\rho_{2}\right)\eta^{\rho_{1}+\rho_{2}+d-2}}{\zeta^{\prime}\left(\rho_{2}\right)}\frac{\Gamma\left(\rho_{1}\right)\Gamma\left(\rho_{2}\right)}{\Gamma\left(\rho_{1}+\rho_{2}+2\right)}\int_{0}^{b}f^{\prime\prime}\left(w\right)w^{\rho_{1}+\rho_{2}+d-1}dw
\]
\[
+O_{\varepsilon}\left(\eta^{d-3/2+\varepsilon}\int_{0}^{b}w^{d-1/2+\varepsilon}\left|f^{\prime\prime}\left(w\right)\right|dw+\eta^{d-2}\int_{0}^{b}w^{d-1}\left|f^{\prime\prime}\left(w\right)\right|dw\right).
\]

A very similar results can be obtained if we consider the M\"obius
function using the same argument, giving
\[
\sum_{n_{1}\leq\eta b}\sum_{n_{2}\leq\eta b-n_{1}}\cdots\sum_{n_{d}\leq\eta b-n_{1}-\dots-n_{d-1}}\mu\left(n_{1}\right)\cdots\mu\left(n_{d}\right)f\left(\frac{n_{1}+\dots+n_{d}}{\eta}\right)
\]
\[
=\sum_{\rho_{1}}\frac{1}{\zeta^{\prime}\left(\rho_{1}\right)}\sum_{\rho_{2}}\frac{\eta^{\rho_{1}+\rho_{2}+d-2}}{\zeta^{\prime}\left(\rho_{2}\right)}\frac{\Gamma\left(\rho_{1}\right)\Gamma\left(\rho_{2}\right)}{\Gamma\left(\rho_{1}+\rho_{2}+2\right)}\int_{0}^{b}f^{\prime\prime}\left(w\right)w^{\rho_{1}+\rho_{2}+d-1}dw
\]
\[
+O_{\varepsilon}\left(\eta^{d-3/2+\varepsilon}\int_{0}^{b}w^{d-1/2+\varepsilon}\left|f^{\prime\prime}\left(w\right)\right|dw+\eta^{d-2}\int_{0}^{b}w^{d-1}\left|f^{\prime\prime}\left(w\right)\right|dw\right).
\]

\section*{acknowledgements}
The first and the second author are members of the Gruppo Nazionale per l'Analisi Matematica, la
Probabilit\`a e le loro Applicazioni (GNAMPA) of the Istituto Nazionale di Alta Matematica (INdAM). The third
author is a member of the Gruppo Nazionale per le Strutture Algebriche, Geometriche e le loro Applicazioni
(GNSAGA) of the Istituto Nazionale di Alta Matematica (INdAM). We thank Professor Jasson Vindas for several discussions about this work.

$\ $

\begin{tabular}{l} Marco Cantarini \\ Dipartimento di Matematica e Informatica \\ Universit\`a di Perugia \\ Via Vanvitelli, 1 \\ 06123, Perugia, Italy \\ email (MC): \texttt{marco.cantarini@unipg.it} \\ 
$\,$
Alessandro Gambini\\ Dipartimento di Matematica\\ Universit\`a degli Studi di Bologna\\  Piazza di Porta San Donato, 5\\ 40126 Bologna, Italy\\ email (AG): \texttt{a.gambini@unibo.it} \\ 
$\,$
Alessandro Zaccagnini \\ Dipartimento di Scienze, Matematiche, Fisiche e Informatiche \\ Universit\`a di Parma \\ Parco Area delle Scienze 53/a \\ 43124 Parma, Italy \\ email (AZ): \texttt{alessandro.zaccagnini@unipr.it} \end{tabular}
\end{document}